\newtheorem{theorem}{Theorem}[section]
\newtheorem{corollary}[theorem]{Corollary}
\newtheorem{definition}[theorem]{Definition}
\newtheorem{lemma}[theorem]{Lemma}
\newenvironment{proof}[1][Proof]{\noindent \emph{#1.} }{\hfill \ 
\rule{0.5em}{0.5em}}
\makeatletter\@addtoreset{equation}{section}\makeatother
\makeatletter\@addtoreset{figure}{section}\makeatother
\makeatletter\@addtoreset{table}{section}\makeatother
\begin{document}

\title{Prospects of tensor-based  numerical  modeling of the collective electrostatic potential  
in  many-particle  systems}

\author{Venera Khoromskaia \thanks{Max Planck Institute for
        Mathematics in the Sciences, Leipzig;
        Max Planck Institute for Dynamics of Complex Technical Systems, 
        Magdeburg ({\tt vekh@mis.mpg.de}).}
        \and
        Boris N. Khoromskij \thanks{Max-Planck-Institute for
        Mathematics in the Sciences, Inselstr.~22-26, D-04103 Leipzig,
        Germany ({\tt bokh@mis.mpg.de}).}
        }
 
  \date{}

\maketitle

\begin{abstract}
Recently the rank-structured tensor approach suggested a progress in the numerical treatment of the 
long-range electrostatic potentials  in many-particle systems and the respective interaction 
energy and forces \cite{KhKh_CPC:13,VeBokh_NLAA:16,BKK_RS:17}.
In this paper,  we outline the prospects for tensor-based numerical modeling of the
collective electrostatic  potential on lattices and in many-particle 
systems of general type. We generalize the approach initially introduced 
for the rank-structured grid-based calculation of the collective potentials on 
3D lattices \cite{KhKh_CPC:13} to the case of many-particle systems with variable charges 
 placed on $L^{\otimes d}$ lattices and discretized on fine $n^{\otimes d}$ 
Cartesian grids  for arbitrary dimension $d$. As result, the interaction potential is represented 
in a parametric low-rank canonical format in $O(d L n)$ complexity. 
The energy is then calculated in $O(d L)$ operations.
Electrostatics in large biomolecules is modeled by using the novel range-separated (RS) tensor format
\cite{BKK_RS:17}, which maintains the long-range part of the 3D collective 
potential of  the many-body  system represented on $n\times n \times n$ grid
in a   parametric low-rank form in $O(n)$-complexity. 
We show  that  the force field can be easily recovered by using 
the already precomputed electric field in the low-rank RS format.
The RS tensor representation of the discretized Dirac delta 
\cite{BKhor_Dirac:18}  enables  the construction of 
the efficient energy preserving (conservative) regularization scheme
for solving the 3D elliptic partial differential equations with strongly singular right-hand side
arising, in particular, in  bio-sciences. 
 We conclude that the rank-structured  tensor-based approximation techniques 
 provide the  promising numerical tools  for applications to  many-body 
dynamics, protein docking and classification problems, for low-parametric
interpolation of scattered data  in data science,  
as well as in machine learning  in many dimensions.
\end{abstract}

\noindent\emph{Key words:}
Coulomb potential,  Slater potential, long-range many-particle interactions, low-rank tensor decomposition,
range-separated tensor formats, summation of electrostatic potentials, energy and force calculations.

\noindent\emph{AMS Subject Classification:} 65F30, 65F50, 65N35, 65F10

\section{Introduction} \label{sec:Intro}   
 
Numerical modeling of  electrostatics  in the large  ensembles  of charged 
particles is considered since long as a challenging problem. 
  Computation of  the electrostatic interactions in many-body dielectric systems
like solute-solvent complexes 
 is a complicated  numerical  problem in molecular dynamics simulations 
 of large solvated biological systems,  
 in docking or folding of proteins, pattern recognition, the assembly of polymer particles
 in colloidal physics  and  many other problems  
  \cite{Holst:94,Jackson:99,CaMeTo:97,HuMcCam:1999,Kaxiras:03,LiStCaMaMe:13,Lu2008,Maday:2018}. 
  Most numerical schemes for modeling these problems are based  
  either on use of FEM/FDM discretization or 
 on application of integral formulations for solving the arising PDEs. 
%  These approaches are based either on use of FEM/FDM discretization for solving the arising PDEs or 
%  on application of integral formulations in the case of linear models. 
  On the other hand,  the complicated many-particle interaction processes  
 are often modeled by using 
 the stochastic Monte Carlo  approaches, avoiding the traditional deterministic 
 methods of high computational complexity.

 Tensor-structured  numerical methods are now becoming popular in scientific computing due to their 
intrinsic property of reducing the grid-based solution of multidimensional problems in 
 $\mathbb{R}^d$  to basically ``one-dimensional'' computations. 
These methods evolved from bridging of the traditional 
rank-structured tensor formats in multilinear algebra 
\cite{De_Lath_PhD:97,Comon:02,Cichocki:2002,smilde-book-2004,Hack_Book12,GraKresTo:13} with the 
nonlinear approximation theory based on a separable representation of multivariate functions and 
operators \cite{HaKhtens:04I,GHK:05,Khor1:06}. 
One of the important ingredients of %the development of 
tensor  decomposition methods in multilinear algebra  was the so-called higher 
order SVD (HOSVD) introduced for the rank reduction 
 in  the Tucker tensors \cite{DMV-SIAM2:00} and further extended to the TT tensor format 
\cite{OsTy_TT:09,Gras:2010,Osel_TT:11}. 
 Development of tensor techniques for numerical solution of multidimensional problems 
in scientific computing
was promoted  by the reduced HOSVD (RHOSVD) method introduced in \cite{khor-ml-2009}.
It allows to reduce the tensor rank in the canonical format by the canonical-to-Tucker 
(C2T) decomposition without the need to construct the full size tensor. The RHOSVD was initially  
used for the rank reduction in canonical tensors %having large initial rank
in calculation of three-dimensional convolution integrals in computational quantum chemistry, 
see \cite{Khor-book-2018,Khor_bookQC_2018} and the references therein.

In this paper, we  outline the beneficial features  of the tensor-based numerical 
modeling of long-range 
interactions in many-particle systems, which allows to avoid the exponential  complexity  
scaling in dimension, that is typical for the traditional numerical approaches. 
In the recent decade there  was  a burst of new results on  rank-structured
tensor approximation of radial-type multivariate functions arising in computer  simulations of
many-particle  ensembles  \cite{KhKh_CPC:13,VeBokh_NLAA:16,BKK_RS:17,LiKeKKMa:19}.
 In what follows, we generalize the tensor-based approach introduced in 
\cite{KhKh_CPC:13,VeBokh_NLAA:16} which  allows to represent the collective electrostatic 
potential  of a complex lattice-type system on an $n\times n\times n $ 3D Cartesian grid 
in a parametric canonical or Tucker-type  formats  by using a small number of terms
 leading to  $O(n)$ complexity. 
 We discuss how electrostatics in bio-molecular complexes can be  modeled by using the 
RS tensor format, which maintains the long-range part of 
the 3D  charge distribution and the collective potential  in a low-parametric
rank-structured form in $O(n)$-complexity \cite{BKK_RS:17,BKhor_Dirac:18,BeKhKhKwSt:18}. 
 The RS tensor decomposition can be applied to wide classes of radial functions 
providing the low-rank representation of long-range part in large sums of the corresponding
generating kernels.
In this paper we present the RS decomposition for the Slater function and provide the 
comparative analysis with the RS splitting  of the Newton kernel. 
 We show how the RS tensor representation of the free-space potential and electric field allows to 
calculate energy and forces for many-particle systems in low cost. 
%perform algebraic operations with the full system potentials in $O(n)$-complexity. 

The rest of the paper is structured as follows. Section \ref{sec:canon_Newton} presents
 a short overview of classical approaches for numerical modeling of the long-range 
 interaction potentials.
Section \ref{ssec:Coulomb_radial} recalls the construction of the canonical tensor representation 
of the Newton kernel, which was the base for further  developments. 
Section \ref{ssec:direct} discusses the direct tensor summation of Newton kernels in the nuclear
potentials, which  yet does not maintain the full power of tensor-based modeling.
Section \ref{ssec:Tensor_sum_lattice} presents the new results on assembled tensor summation of the 
 interaction  potentials of charged particles centered at nodes of a 3D lattice, 
yielding the linear complexity scaling  for energy calculation  in the univariate lattice size. 
Section \ref{sec:Gener_Mpaticle_pot} describes the main ideas and
advantages of the recent RS tensor format in numerical modeling 
of electrostatics in many-body systems of general type.
 Section \ref{sec:rs-format} discusses the application of RS tensor format in bio-molecular 
 modeling including the energy and force calculations. 
 Section \ref{sec:Append} sketches the main rank-structured tensor formats.

 \section{Canonical tensor approximation of interaction potentials}
 \label{sec:canon_Newton}

 \subsection{Low-rank canonical representation of radial functions}
 \label{ssec:Coulomb_radial}

First, we recall the grid-based method for the low-rank canonical  
representation of a spherically symmetric kernel function $p(\|x\|)$, 
$x\in \mathbb{R}^d$ for $d=2,3,\ldots$, by its projection onto the finite set
of basis functions defined on tensor grid. 
The approximation theory by a sum of Gaussians for the class of analytic potentials 
$p(\|x\|)$  was presented in \cite{Stenger:93,HaKhtens:04I,GHK:05,Khor1:06,Khor-book-2018}.
The particular numerical schemes for rank-structured representation of the Newton 
and Yukawa Green's kernels 
\begin{equation}\label{eqn:NewtYukaw}
p(\|x\|)=\frac{1}{4 \pi \|x\|},\quad  \mbox{  and  }\quad
p(\|x\|)=\frac{e^{-\lambda \|x\|}}{4 \pi \|x\|},\quad
 x\in \mathbb{R}^3,
\end{equation}
discretized on a fine 3D  Cartesian grid
in the form of low-rank canonical tensor was described in \cite{HaKhtens:04I,Khor1:06,BeHaKh:08}.

In what follows, for the ease of exposition, we confine ourselves to the case $d=3$.
%though that the sinc-quadrature based separable approximation 
%of the radial functions apply for the arbitrary dimension $d$.
In the computational domain  $\Omega=[-b,b]^3$, 
let us introduce the uniform $n \times n \times n$ rectangular Cartesian grid $\Omega_{n}$
with mesh size $h=2b/n$ ($n$ even).
Let $\{ \psi_\textbf{i} =\prod_{\ell=1}^3 \psi_{i_\ell}^{(\ell)}(x_\ell) \}$ 
be a set of tensor-product piecewise constant basis functions, 
% \begin{equation*}\label{eqn:basis}
% \psi_\textbf{i}(\textbf{x})=\prod_{\ell=1}^3 \psi_{i_\ell}^{(\ell)}(x_\ell),
% \end{equation*}
labeled by  the $3$-tuple index ${\bf i}=(i_1,i_2,i_3)$, 
$i_\ell \in I_\ell=\{1,...,n\}$, $\ell=1,\, 2,\, 3 $.
The generating kernel $p(\|x\|)$ is discretized by its projection onto the basis 
set $\{ \psi_\textbf{i}\}$
in the form of a third order tensor of size $n\times n \times n$, defined entry-wise as
\begin{equation}  \label{eqn:galten}
% {\bf P} := [p_{i_1  i_2  i_3}] \in \mathbb{R}^{n \times n \times n },
% \quad
\mathbf{P}:=[p_{\bf i}] \in \mathbb{R}^{n\times n \times n},  \quad
%\mathbf{P}:=\pc{p_\tb{i}}_{\tb{i} \in \mathcal{I}}\in \mathbb{R}^{n\times n \times n},  \quad
 p_{\bf i} = 
%p_{i_1  i_2  i_3}=
\int_{\mathbb{R}^3} \psi_{\bf i} ({x}) p(\|{x}\|) \,\, \mathrm{d}{x}.
% \frac{\psi_{i_1 i_2 i_3}(\textbf{x})}{\|\mathbf{x}\|} \,\, \mathrm{d}\tb{x}.
 % \quad \mbox{where}\quad \Omega_\tb{i}=\operatorname{supp}( \psi_\tb{i}).
\end{equation}

The low-rank canonical decomposition of the $3$rd order tensor $\mathbf{P}$ is based 
on using exponentially convergent 
$\operatorname*{sinc}$-quadratures for approximating the Laplace-Gauss transform 
to the analytic function $p(z)$, $z \in \mathbb{C}$, specified by a certain weight $\widehat{p}(t) >0$,
\begin{align} \label{eqn:laplace} 
p(z)=\int_{\mathbb{R}_+} \widehat{p}(t) e^{- t^2 z^2} \,\mathrm{d}t \approx
\sum_{k=-M}^{M} p_k e^{- t_k^2 z^2} \quad \mbox{for} \quad |z| > 0,\quad z \in \mathbb{R},
\end{align} 
with the proper choice of the quadrature points $t_k$ and weights $p_k$.
% Under the assumption $0< a \leq |z |  < \infty$
% this quadrature can be proven to provide an exponential convergence rate in $M$
% for a class of analytic functions $p(z)$. 
 The $sinc$-quadrature based approximation to generating function by 
using the short-term Gaussian sums in (\ref{eqn:laplace})  %, (\ref{eqn:hM}) 
are applicable to the class of analytic functions
in certain strip $|z|\leq D $ in the complex plane, such that on the real axis these functions decay
polynomially or exponentially. We refer to basic results in 
\cite{Stenger:93,Braess:BookApTh,HaKhtens:04I}, 
where the exponential convergence of the $sinc$-approximation in the number of terms 
(i.e., the canonical rank) was analyzed for certain classes of analytic integrands.

Now, for any fixed $x=(x_1,x_2,x_3)\in \mathbb{R}^3$, 
such that $\|{x}\| > a > 0$, %$\|\mathbf{x}\|$
we apply the $\operatorname*{sinc}$-quadrature approximation (\ref{eqn:laplace})  % , (\ref{eqn:hM}) 
to obtain the separable expansion
\begin{equation} \label{eqn:sinc_Newt}
 p({\|{x}\|}) =   \int_{\mathbb{R}_+} \widehat{p}(t)
e^{- t^2\|{x}\|^2} \,\mathrm{d}t  \approx 
\sum_{k=-M}^{M} p_k e^{- t_k^2\|{x}\|^2}= 
\sum_{k=-M}^{M} p_k  \prod_{\ell=1}^3 e^{-t_k^2 x_\ell^2},
\end{equation}
providing an exponential convergence rate in $M$,
% Under the assumption $0< a \leq \|{x}\| \leq A < \infty$
% this approximation provides the exponential convergence rate in $M$,
\begin{equation} \label{eqn:sinc_conv}
\left|p({\|{x}\|}) - \sum_{k=-M}^{M} p_k e^{- t_k^2\|{x}\|^2} \right|  
\le \frac{C}{a}\, \displaystyle{e}^{-\beta \sqrt{M}},  
\quad \text{with some} \ C,\beta >0.
\end{equation}

Combining \eqref{eqn:galten} and \eqref{eqn:sinc_Newt}, and taking into account the 
separability of the Gaussian basis functions, we arrive at the low-rank 
approximation to each entry of the tensor $\mathbf{P}=[p_{\bf i}]$,
\begin{equation*} \label{eqn:C_nD_0}
 p_{\bf i} \approx \sum_{k=-M}^{M} p_k   \int_{\mathbb{R}^3}
 \psi_{\bf i}({x}) e^{- t_k^2\|{x}\|^2} \mathrm{d}{x}
=  \sum_{k=-M}^{M} p_k  \prod_{\ell=1}^{3}  \int_{\mathbb{R}}
\psi^{(\ell)}_{i_\ell}(x_\ell) e^{- t_k^2 x^2_\ell } \mathrm{d} x_\ell.
\end{equation*}
Define the vector (recall that $p_k >0$) 
\begin{equation} \label{eqn:galten_int}
\textbf{p}^{(\ell)}_k
= p_k^{1/3} \left[b^{(\ell)}_{i_\ell}(t_k)\right]_{i_\ell=1}^{n_\ell} \in \mathbb{R}^{n_\ell}
\quad \text{with } \quad b^{(\ell)}_{i_\ell}(t_k)= 
\int_{\mathbb{R}} \psi^{(\ell)}_{i_\ell}(x_\ell) e^{- t_k^2 x^2_\ell } \mathrm{d}x_\ell,
\end{equation}
then the $3$rd order tensor $\mathbf{P}$ can be approximated by 
the $R$-term ($R=2M+1$) canonical representation
\begin{equation} \label{eqn:sinc_general}
    \mathbf{P} \approx  \mathbf{P}_R =
\sum_{k=-M}^{M} p_k \bigotimes_{\ell=1}^{3}  {\bf b}^{(\ell)}(t_k)
= \sum\limits_{k=-M}^{M} {\bf p}^{(1)}_k \otimes {\bf p}^{(2)}_k \otimes {\bf p}^{(3)}_k
\in \mathbb{R}^{n\times n \times n}, \quad {\bf p}^{(\ell)}_k \in \mathbb{R}^n.
% \quad {\bf p}^{(\ell)}_k \in \mathbb{R}^n.
%\quad a_k,\, t_k \in \mathbb{R},
\end{equation}
Given a threshold $\varepsilon >0 $, in view of (\ref{eqn:sinc_conv}), we can chose  
$M=O(\log^2\varepsilon )$ such that in the max-norm
\begin{equation*} \label{eqn:error_control}
\| \mathbf{P} - \mathbf{P}_R \|  \le \varepsilon \| \mathbf{P}\|.
\end{equation*}

In the case of Newton kernel we have $p(z)=1/z$, $\widehat{p}(t)=\frac{2}{\sqrt{\pi}}$, so that
the Laplace-Gauss transform representation reads
\begin{equation} \label{eqn:Laplace_Newton}
 \frac{1}{z}= \frac{2}{\sqrt{\pi}}\int_{\mathbb{R}_+} e^{- z^2 t^2 } dt, \quad 
 \mbox{where}\quad z=\|x\|, \quad x \in \mathbb{R}^3,
 %z=\sqrt{x_1^2 + x_2^2  + x_3^2}.
\end{equation}
which can be approximated by the sinc quadrature (\ref{eqn:sinc_Newt}) with the particular choice 
of quadrature points $t_k$, providing the exponential convergence rate as 
in (\ref{eqn:sinc_conv}), \cite{HaKhtens:04I,Khor1:06}.

In the case of Yukawa potential the Laplace Gauss transform reads   %representation (\ref{eqn:Laplace_Newton})
 \begin{equation} \label{eqn:Laplace_Yukawa}
  \frac{e^{-\kappa z}}{z}= \frac{2}{\sqrt{\pi}}\int_{\mathbb{R}_+} e^{-\kappa^2/t^2} e^{- z^2 t^2 } dt, \quad 
 \mbox{where}\quad z=\|x\|, \quad x \in \mathbb{R}^3.
 %z=\sqrt{x_1^2 + x_2^2  + x_3^2}.
 \end{equation}
 The analysis of the sinc quadrature approximation error for this case can 
 be found in particular in \cite{Khor1:06,Khor-book-2018}, \S2.4.7.

 One can observe from numerical tests that there are canonical  vectors representing the long- 
and short-range (highly localized) contributions to the total electrostatic potential. 
This interesting feature was also recognized for the rank-structured 
tensors representing a lattice sum of electrostatic potentials 
\cite{KhKh_CPC:13,VeBokh_NLAA:16}. 

\begin{table}[tbh]\label{tab_Newton}
 \begin{center}%
\begin{tabular}
[c]{|r|r|r|r|r|r|}%
\hline
grid size $n^3 $ & $8192^3$  & $16384^3$ & $32768^3$ & $65536^3$ & $131072^3$\\ 
 \hline 
 Time (s)     &  $1 $    &    $2$  &   $8 $   &  $43$    & $198$ \\
 \hline 
Canonical rank $R_N$ &  $34$ &   $36$  &   $38 $   &  $40$    & $42$ \\
 \hline 
Compression rate  & $2\cdot 10^6$ & $7\cdot 10^6$ & $2\cdot 10^7$ & $1\cdot 10^8$ & $4\cdot 10^8$  \\
 \hline 
 \end{tabular}
 \end{center}
\caption{\small CPU time for generation of the canonical tensor representation 
of the Newton kernel ${\bf P}$, with accuracy $\varepsilon = 10^{-6}$. }
\end{table} 
Table \ref{tab_Newton} demonstrates the times (using Matlab) for generating the 
Newton kernel in computational box with a given size of the 3D Cartesian grid. 
The accuracy (tolerance error) is chosen as $\varepsilon = 10^{-6}$. 
Table \ref{tab_Newton} shows that the low-parametric tensor representation requires  
feasible computation time for large  sizes of the three-dimensional grid and provides
huge compression rate compared with the full size tensor.

Initially the representation of the convolving kernel $\frac{1}{\|{ x} \|}, \; x  \in \mathbb{R}^3$ 
by a low-rank canonical tensor  was used in calculation of the 3D convolution integral 
operators in 1D complexity   
in electronic structure calculations %, providing the $O(n\log n)$ complexity 
\cite{khor-ml-2009,KhKhFl_Hart:09,Khor_bookQC_2018}.

 \subsection{Many particle potential via direct tensor summation}
 \label{ssec:direct}
 
 Calculation of the nuclear potential operator in the tensor-based Hartree-Fock solver
 is performed by using the canonical tensor representation of the 
 Newton kernel \cite{Khor_bookQC_2018}.
 Nuclear potential operator in the Hartree-Fock equation is computed as the   
 electrostatic potential created by all nuclei in a molecule,  
 \begin{equation}\label{eqn:Electrost_sum}
V_c(x)= - \sum_{a=1}^{N}\frac{Z_a}{\|{x} -x_a \|},\quad
Z_a \in \mathbb{R}, \;\; x, x_a\in \mathbb{R}^3,
\end{equation}
where $N$ is the number of nuclei and $Z_a$ are their charges.

When using the tensor-based numerical approach, the nuclear potential operator 
is calculated in a computational box
$[-b/2,b/2]^3$, using the $n \times n \times n $ 3D Cartesian grid.
First, the reference canonical rank-$R$ tensor of size $2n \times 2n \times 2n $ representing 
the Newton kernel is generated in a box of a double size $[-b,b]^3$,
\begin{equation}
\label{eqn:NewtCan}
\widehat{\bf P}_{R}= 
\sum\limits_{q=1}^{R} \widehat{\bf p}^{(1)}_q \otimes \widehat{\bf p}^{(2)}_q 
\otimes \widehat{\bf p}^{(3)}_q.
%\in \mathbb{R}^{2n\times 2n \times 2n},
\end{equation}
In the course of potential calculation
the reference tensor $\widehat{\bf P}_R$ should be translated to all nuclei positions
by the shifting (windowing) operator 
\begin{equation}
\label{Sh-Win}
 {\cal W}_{a}={\cal W}_{a}^{(1)} \otimes {\cal W}_{a}^{(2)}\otimes {\cal W}_{a}^{(3)},
\end{equation}
which is constructed for every nucleus $a$ in a molecule, to shift the reference Newton kernel 
to the number of grid-points %(indices)
corresponding to coordinates of a nucleus in variables $x$, $y$ and $z$, and then to
make a cut-off of the shifted tensor to the computational box of size $[-b/2,b/2]^3$, see details 
in  \cite{Khor_bookQC_2018}. The upper indexes $(1)$, $(2)$
and $(3)$ of the operators ${\cal W}_{a}$ in (\ref{Sh-Win}) 
correspond to the axes  $x$, $y$ and $z$, respectively.
  
The resulting electrostatic potential is composed as a sum of canonical tensors 
%which represent the charges of all nuclei in a molecule and 
which are  
obtained by application of the corresponding scaled (with the charges of 
the corresponding nuclei) shifting-windowing operators 
to the reference Newton kernel,
\begin{equation}
\label{eqn:Direct_sum}
 V_c \approx{\bf P}_{c}  = \sum_{a=1}^{N} Z_a {\cal W}_{a} \widehat{\bf P}_R 
              =\sum_{a=1}^{N} Z_a  
\sum\limits_{q=1}^{R} {\cal W}_{a}^{(1)} \widehat{\bf p}^{(1)}_q \otimes 
{\cal W}_{a}^{(2)} \widehat{\bf p}^{(2)}_q 
\otimes {\cal W}_{a}^{(3)} \widehat{\bf p}^{(3)}_q\in \mathbb{R}^{n\times n \times n}.%\nonumber
%\quad rank({\bf P}_{c})\leq M_0 R,
\end{equation}
In general,
 the rank of the resulting canonical tensor ${\bf P}_c$ is proportional to the number 
 of nuclei, $N$, i.e.,   max(Rank(${\bf P}_{c}$)) = $R N$.
    It can be reduced by using the combination of the canonical-to-Tucker and the 
  Tucker-to-canonical transforms. The accuracy of this rank reduction depends on the chosen
  $\varepsilon$-threshold for the tensor transforms. Hence, the direct tensor summation 
  of the potentials maintaining the high accuracy leads, 
  in general, to unacceptably large ranks of the resulting tensor.
  
  In what follows we explain how to get rid of this drawback in the framework of 
  rank-structured tensor calculations in case of both lattice-structured and unstructured 
  locations of particles.

 \section{Assembled tensor summation for particles on a lattice }
 \label{sec:lattices}
 
 \subsection{Sketch of classical approaches}
  \label{ssec:classic}
  
  Let us consider the well-known expressions for the electrostatic potentail of a number of charges
  particles and their interaction energy. The collective electrostatic potentials generated by $N$
  charged particles  is defined by (\ref{eqn:Electrost_sum})),  
 %Summation of long range electrostatic potentials on large $L\times L\times L $ finite lattices,
%  \begin{equation}
%  \label{coll_electr}
%   V_c(x)= \sum_{i=1}^{A}\frac{Z_i}{\|{x} -x_i \|}, \; x\in\mathbb{R}^3.
%  \end{equation}
while the energy of their electrostatic interaction is given by
  \begin{equation}
 \label{inter_energy1} 
 E_{nuc}= \sum^{N}_{i=1} \sum^{N}_{j <i} \frac{Z_i Z_j}{\|x_i- x_j\|}.  
 \end{equation}
 When using the traditional numerical approaches, the potential (\ref{eqn:Electrost_sum}))
  may be computed for every point $x\in\mathbb{R}^3$ on $n\times n \times n$ grid separately, 
  yielding the extensive computational work of the order of $O(N n^3)$.
  On the other hand, the low-rank parametric representation of the collective 
  potential of $N$ generally distributed charged particles discretized over $n\times n \times n$ 
  grid   is not tractable. 
  The straightforward calculation of the energy by (\ref{inter_energy1}) amounts to 
  $O(N^2)$ operations.
  
 If particles are located on an $L \times L\times L$ 
 lattice (i.e., $N=L^3$) the energy calculation an be  accelerated.  
 The traditional method of Ewald-type
 summation \cite{Ewald:27} is based on a specific local-global decomposition  of the Newton kernel,
\[
 \frac{1}{r}=\frac{\tau(r)}{r} + \frac{1-\tau(r)}{r}, \quad r=\|x\|,
\]
  where the cutoff function $\tau$ is chosen as the complementary error function
\[ 
\tau(r)=\operatorname{erfc}(r):=\frac{2}{\sqrt{\pi}}\int_{r}^\infty \exp(-t^2) dt.   
 \]
In the Ewald summation method \cite{Ewald:27,DYP:93,PolGlo:96,Hune_Ewald:99,HuMcCam:1999} the computation 
of the interaction energy on the long-range term is performed in the 
Fourier space (using periodic boundary conditions). It allows to reduce computational 
work for calculation of the interaction energy of particles placed in the nodes of the lattice
from $O(L^6)$ to $O(L^3 \log L)$. 

%   for calculation of the interaction energy of particles placed in the nodes of 
%   an $L \times L\times L$ lattice.
%   
%Classical methods in 3D many-particle modelling:  

The fast multipole method  (FMM) \cite{RochGreen:87} is used for computation of the interaction energy of 
charged multiparticle systems of general type at the expense  $O(N \log^q N)$.
Notice that, both Ewald summation method and FMM operate only with the values of interaction 
potential at the particle centers, but they 
are not capable to calculate and store the potential function on the fine spacial 
$n \times n \times n$ grid at the cost that is noticeably lower than $O(N^2 n^3)$.

%  
% To our best knowledge, 
% there are no feasible numerical methods by using the standard techniques
% for grid-based computation of the collective electrostatic potential  for charged particles 
% placed in the nodes of a large $L \times L\times L$ lattice in a volume, 
% see (\ref{eqn:CoulombLattice}).
% \begin{equation}\label{eqn:CoulombLattice}
% V_{c_L}(x)=  \sum_{k_1,k_2,k_3\in { K}} 
% \frac{Z}{\|{x} -a_1 (k_1,k_2,k_3)\|}, \quad x\in \Omega_L\in \mathbb{R}^3.
% \end{equation}
%there are no feasible methods by using the standard numerical approaches.
 
In the following section, we describe the novel tensor based techniques for fast 
grid-based calculation of electrostatic 
potential of a lattice-structured system and for efficient recovery of the many-particle
interaction energy.
 
\subsection{Low rank potential sum over rectangular lattice}
  \label{ssec:Tensor_sum_lattice}

Recently the rank-structured tensor approach suggested a progress in the numerical treatment of the 
long-range electrostatic potentials  in many-particle systems. 
Here, we recall the 
method for fast summation of the electrostatic potentials placed on large 3D lattice,
introduced by the authors in  \cite{KhKh_CPC:13,Khor_bookQC_2018},
and generalize it to the case of rather general distribution of charges 
in $d$ -dimensional setting.
We will show, that given the canonical tensor representation of the generating kernel, the 
collective many-particle potential on a $d$-dimensional rectangular lattice can be
computed at the cost that is almost linear (or quadratic) proportional to the univariate lattice size
independently on the number of dimensions $d$.

Let us consider first a sum of single Coulomb potentials with equal point charge $Z$ on a 3D
finite $L \times L\times L$ lattice in a volume box $\Omega_0=[-b/2,b/2]^3$,
\begin{equation}\label{eqn:CoulombLattice}
V_{c_L}(x)=  \sum_{k_1,k_2,k_3\in {\cal K}} 
\frac{Z}{\|{x} -a_1 (k_1,k_2,k_3)\|}, \quad x\in \Omega_L\in \mathbb{R}^3,
\end{equation}
where ${\cal K}=\{k: \, 1\leq k \leq L \}$.
The assembled tensor summation method applies to the lattice potentials 
defined on a fine $n\times n\times n$ 3D Cartesian grid that embeds also the lattice points. 
This techniques reduces the calculation of the collective potential sum over a rectangular 
3D lattice, % ${\bf P}_{c_L}\in \mathbb{R}^{n\times n  \times n} $,
\[
  {\bf P}_{c_L}= \sum\limits_{{\bf k} \in {\cal K}^{\otimes 3}}  { W}_{\nu({\bf k})} \widehat{\bf P}
= \sum\limits_{k_1,k_2,k_3 \in {\cal K}}  {W}_{ ({\bf k})} \sum\limits_{q=1}^{R}  
( \widehat{\bf p}^{(1)}_{q} \otimes \widehat{\bf p}^{(2)}_{q} 
\otimes \widehat{\bf p}^{(3)}_{q}) \in \mathbb{R}^{n\times n  \times n},
\]
\begin{figure}[tbh]
\centering  
\includegraphics[width=6cm]{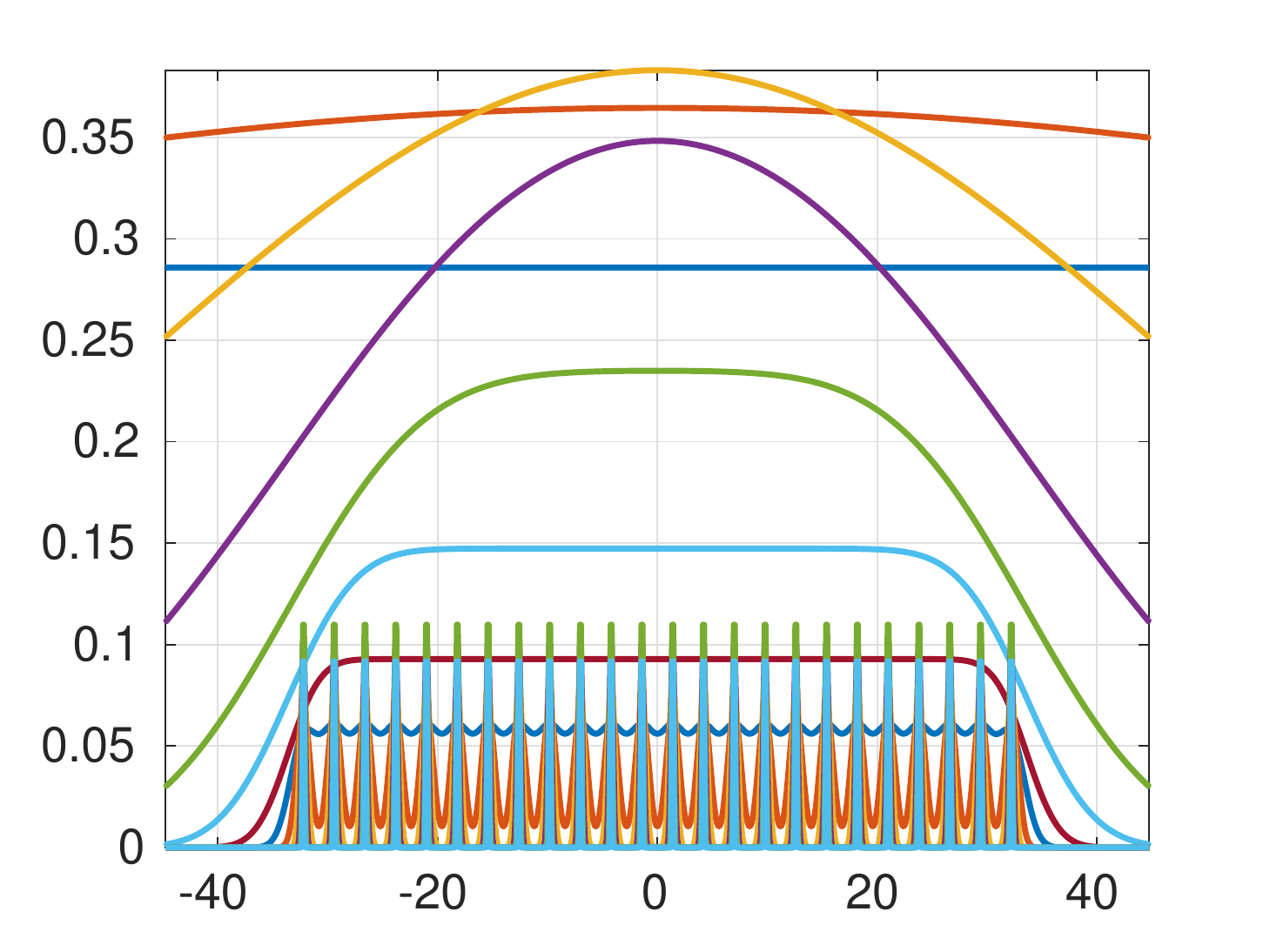}\quad\quad\quad
\includegraphics[width=6cm]{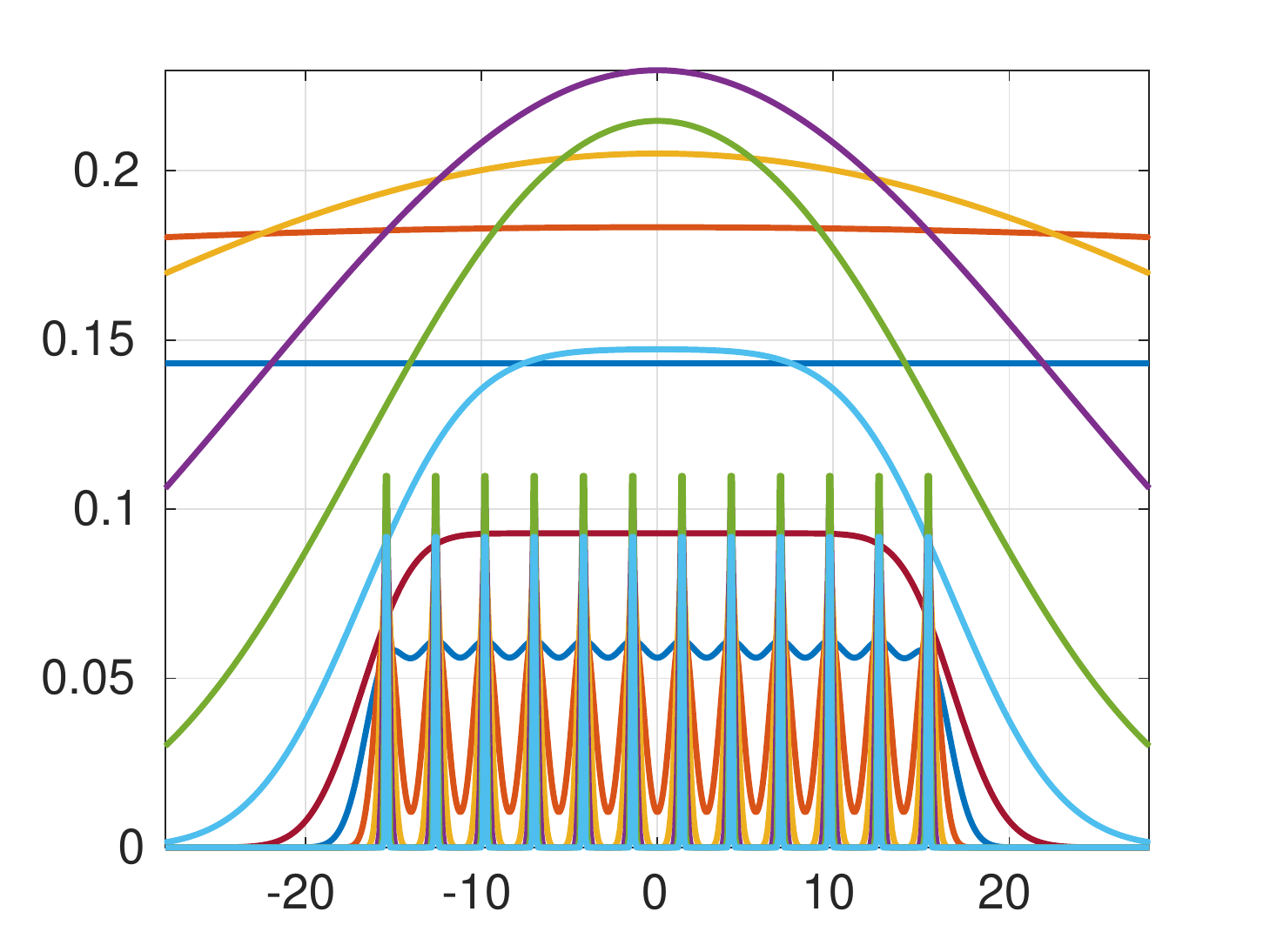}\\
\includegraphics[width=6cm]{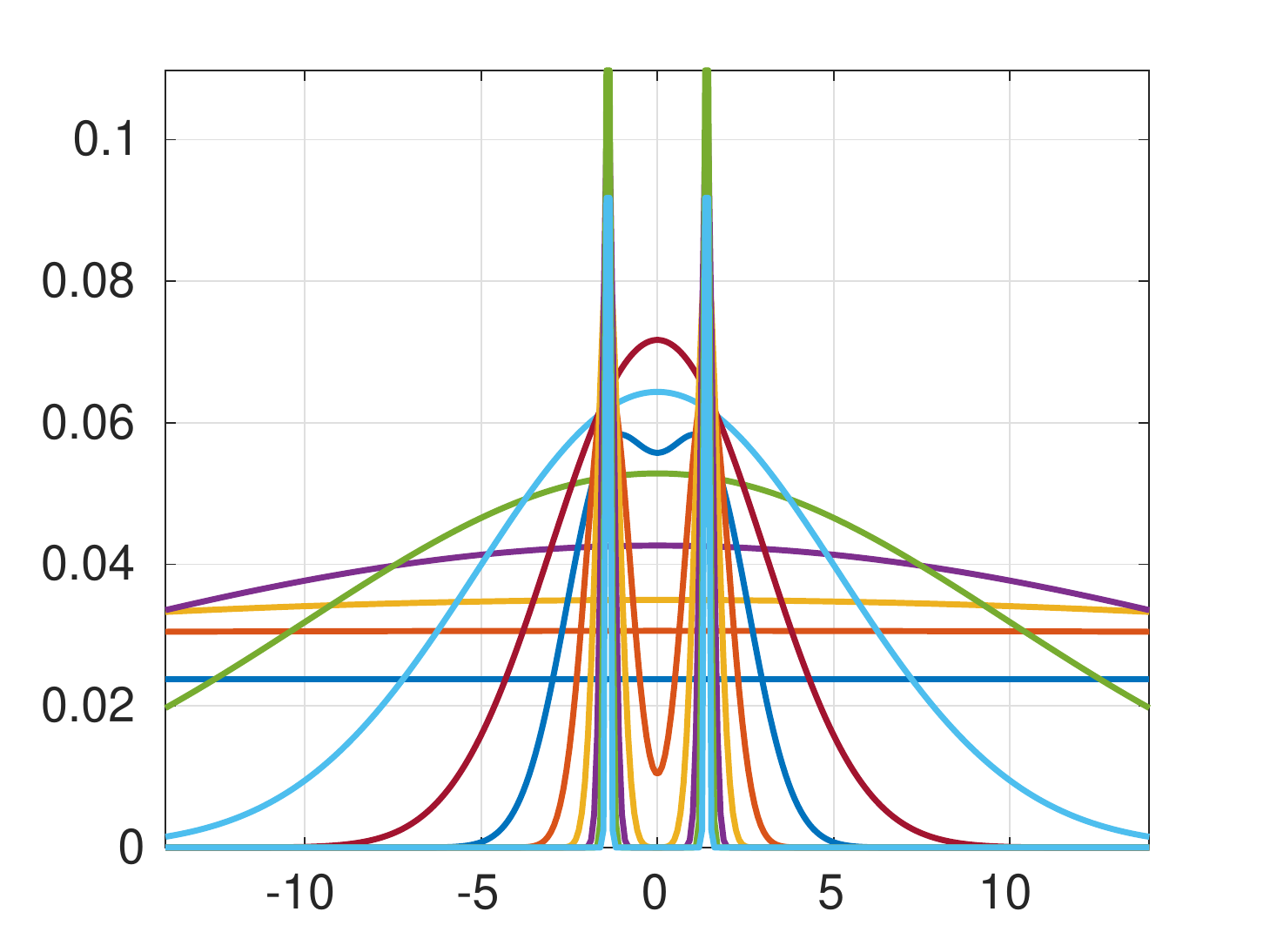}\quad
\includegraphics[width=7cm]{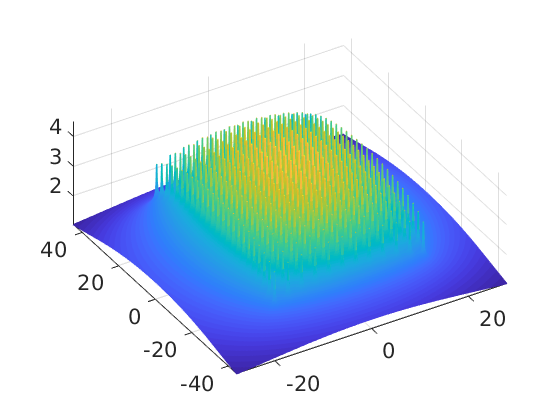}
\caption{Assembled $x$-, $y$- and $z$-axis canonical vectors for a cluster of 
$24\times 12\times 2$ Hydrogen atoms and  the collective 3D electrostatic potential 
at the cross-section corresponding to one of z-axis layers.} 
\label{fig:Assembl_can}  
\end{figure}
to the summation (assembling) of shifted directional vectors of the canonical tensor 
representation for a single Newton kernel \cite{KhKh_CPC:13}, 
\begin{equation}\label{eqn:CoulombLatTensCan}
{\bf P}_{c_L}=  
\sum\limits_{q=1}^{R}
(\sum\limits_{k_1\in {\cal K}} { W}_{({k_1})}  \widehat{\bf p}^{(1)}_{q}) \otimes 
(\sum\limits_{k_2\in {\cal K}} { W}_{({k_2})} \widehat{\bf p}^{(2)}_{q}) \otimes 
(\sum\limits_{k_3\in {\cal K}}{ W}_{({k_3})}  \widehat{\bf p}^{(3)}_{q}).
\end{equation}
Here $\widehat{\bf P}$ (similar to (\ref{eqn:NewtCan})) represents 
the single Newton kernel on a twice larger grid and 
\begin{equation}\label{wind_latt}
 { W}_{\nu({\bf k})}= { W}_{(k_1)}\otimes { W}_{(k_2)}\otimes { W}_{(k_3)}
\end{equation}
is the shift-and-windowing (onto $\Omega_L$) separable transform along the ${\bf k}$-grid,
analogous to the operator (\ref{Sh-Win}) for direct tensor summation.
For rectangular finite 3D lattices the rank of the resulting sum is proven 
to be the same as for the $R$-term 
canonical tensor $\widehat{\bf P}$ representing a single Coulomb potential \cite{KhKh_CPC:13}.

Figure \ref{fig:Assembl_can} shows the shapes of the assembled canonical vectors for the 
tensor-based summation of the Hydrogen-nuclei potentials on a 
rectangular two-layers lattice with of size $24 \times 24 \times 2$. 
Left bottom figure shows the electrostatic potential at the
cross-section perpendicular to the $z$-axis, which corresponds to the upper layer of the charges. 

In the following statement, we generalize this theorem to the case of 
$d$-dimensional rectangular lattices, thus avoiding the curse of dimensionality.
\begin{theorem}\label{thm:d_lattice}
Given rank-$R$ reference canonical tensor ${\bf P}$, representing a single reference potential, 
the collective interaction potential $v_{c_L} (x)$, $x\in \Omega_{L}$,  
 generated by point potentials on $L^{\otimes d}$ rectangular lattice  is presented by the 
canonical tensor ${\bf P}_{c_L}$ of the same rank $R$,
\begin{equation}\label{eqn:EwaldTensorGl}
{\bf P}_{c_L}= %\sum\limits_{\nu=1}^{M_0} Z_\nu 
Z \sum\limits_{q=1}^{R}
(\sum\limits_{k_1=1}^{L}{\cal W}_{({k_1})} {\bf p}^{(1)}_{q}) \otimes 
(\sum\limits_{k_2=1}^{L} {\cal W}_{({k_2})} {\bf p}^{(2)}_{q}) \otimes \cdots \otimes 
(\sum\limits_{k_d=1}^{L}{\cal W}_{({k_d})} {\bf p}^{(d)}_{q}),
\end{equation}
where ${\cal W}_{({k_\ell})}$, $\ell=1,\,2,\,\ldots,\, d$ are the shifting-windowing operators for
the $d$-dimensional lattice $L\times L\times \cdots \times L$.
The numerical cost for evaluation of this potential on $n^{\otimes d}$ tensor grid is 
estimated by $O(d R L n)$.
\end{theorem}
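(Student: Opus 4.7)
The plan is to reduce the $d$-dimensional statement to an algebraic identity that exploits the separability of the shifting-windowing operator combined with the canonical (rank-$R$) structure of the reference tensor $\mathbf{P}$, essentially mimicking the 3D argument from \cite{KhKh_CPC:13} but keeping track of the complexity in $d$.

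First, I would write the collective potential as
\begin{equation*}
\mathbf{P}_{c_L} \;=\; Z\sum_{\mathbf{k}\in \mathcal{K}^{\otimes d}} {\cal W}_{\nu(\mathbf{k})}\, \mathbf{P},
\qquad \mathcal{K}=\{1,\dots,L\},
\end{equation*}
which is just the $d$-dimensional analogue of (\ref{eqn:Direct_sum}) with equal charges $Z$ and shifts running over all lattice nodes. Next I would substitute the canonical decomposition $\mathbf{P}=\sum_{q=1}^R \mathbf{p}^{(1)}_q\otimes\cdots\otimes\mathbf{p}^{(d)}_q$ and use the product form ${\cal W}_{\nu(\mathbf{k})}={\cal W}_{(k_1)}\otimes\cdots\otimes{\cal W}_{(k_d)}$ of the separable shift-and-window operator (the natural $d$-dimensional generalization of (\ref{wind_latt})). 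Because the tensor product distributes over linear maps mode-wise, the resulting expression becomes
\begin{equation*}
\mathbf{P}_{c_L} \;=\; Z\sum_{q=1}^R \sum_{k_1,\dots,k_d\in\mathcal{K}} \bigl({\cal W}_{(k_1)}\mathbf{p}^{(1)}_q\bigr)\otimes\cdots\otimes\bigl({\cal W}_{(k_d)}\mathbf{p}^{(d)}_q\bigr).
\end{equation*}

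The key algebraic step is then the observation that, since the indices $k_1,\dots,k_d$ run independently over $\mathcal{K}$ and each factor in the tensor product depends on only one of them, the multi-sum factorizes:
\begin{equation*}
\sum_{k_1,\dots,k_d}\bigotimes_{\ell=1}^d {\cal W}_{(k_\ell)}\mathbf{p}^{(\ell)}_q \;=\; \bigotimes_{\ell=1}^d\Bigl(\sum_{k_\ell=1}^L {\cal W}_{(k_\ell)}\mathbf{p}^{(\ell)}_q\Bigr).
\end{equation*}
Inserting this into the previous display yields exactly (\ref{eqn:EwaldTensorGl}), and in particular the canonical rank of $\mathbf{P}_{c_L}$ does not exceed $R$, independently of $d$ and $L$.

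For the complexity count, I would argue mode by mode: for each of the $R$ canonical components and each of the $d$ modes, the ``assembled'' vector $\sum_{k_\ell=1}^L {\cal W}_{(k_\ell)}\mathbf{p}^{(\ell)}_q\in\mathbb{R}^n$ is obtained by accumulating $L$ shifted copies of an $n$-vector, at the cost $O(Ln)$; this gives the overall $O(dRLn)$ bound for constructing the factor matrices of $\mathbf{P}_{c_L}$. The only real conceptual point — and the one I would check carefully — is the independence of the shift indices $k_1,\dots,k_d$, which is what makes the $d$-fold sum collapse into a single tensor product of $d$ one-dimensional sums; for a non-rectangular (e.g. defective or sparsely occupied) lattice, this separation breaks down and the rank would generically inflate, so I would emphasize that rectangularity of the lattice is exactly the hypothesis that preserves rank $R$.
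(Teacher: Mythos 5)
Your argument is correct and matches the paper's approach exactly: the paper proves this (implicitly, by citing \cite{KhKh_CPC:13} and by the explicit computation given for Theorem \ref{thm:d_lattice_Variable}, of which this is the $R_Z=1$ special case) by precisely the same three steps — writing the direct sum over lattice shifts, invoking separability of ${\cal W}_{\nu(\mathbf{k})}$ together with the rank-$R$ canonical form of $\mathbf{P}$, and then factorizing the $d$-fold sum because each tensor factor depends on a single independent index $k_\ell$. Your closing remark that rectangularity is what makes the multi-sum collapse (and that vacancies inflate the rank) is also in agreement with the paper's discussion of defected lattices.
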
 
The proof is similar to that given in \cite{KhKh_CPC:13}, and it is based on  separability  
of the shifting-windowing operator (\ref{wind_latt}) on rectangular lattices.

For 3D lattices with multiple vacancies, the tensor rank increases by a 
small factor  \cite{VeBokh_NLAA:16}. % see example in Figure \ref{fig:compos_grid}.
Indeed, for composite geometries the electrostatic potential  over defected lattices with  
simple inclusions can be represented by a sum of low-rank tensors supported on the particular 
vacancies, ${\cal L}_q$, which compose the total lattice ${\cal L}$, 
\[
 {\cal L}= \bigcup {\cal L}_q.
\]
 An example of the collective electrostatic potential over a composite lattice with two impurities
with different interatomic distances is shown in Figure \ref{fig:compos_grid}. 
% The collective electrostatic potential for the example of a composite lattice with two impurities
% with different interatomic distances is given in Figure \ref{fig:compos_grid}.
One of the impurities is defined on the sub-lattice of size $3\times 3\times 1$, 
with a larger positive point charge,
while the second one is supported by a sub-lattice of size $3\times 3\times 1$, and 
contains negative point charges.
Further examples can be found in \cite{Khor_bookQC_2018},
where the assembled tensor summation is discussed in more detail.

In fact, the assembled tensor summation for lattices
admits modeling of a number of such impurities since they are all living on the same
$n\times n\times n $ 3D Cartesian grid. Then such  low-rank canonical tensor representation 
the collective electrostatic potential 
enables operations like 3D convolution on 3D grids with $O(n)$ complexity.
\begin{figure}[tbh] \label{fig:compos_grid}
\centering  
\includegraphics[width=6.0cm]{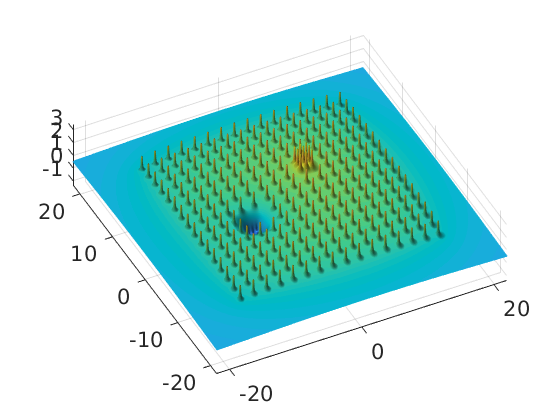}  \quad
 \includegraphics[width=6.0cm]{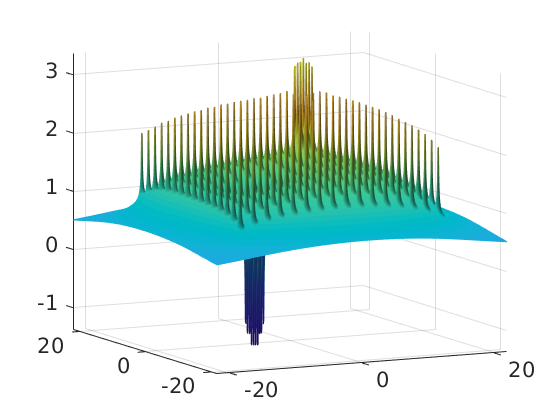}
\caption{\small  A cross-section of the potential at the plane with $z=0$ for the 
lattice of size $16\times 16\times 1$ with two impurities 
having different interatomic distances and different charges.
The left and right figures show  the views from different angles.}
  \end{figure}

 \begin{table}[tbh]\label{Tab:times}
\begin{center}%
\begin{tabular}
[c]{|r|r|r|r|r|}%
\hline
  $L^3$ & $64^3$  & $128^3$ & $256^3$ & $512^3$ \\ 
 \hline 
   number of  particles  & 262144  & 2097152 & $16\cdot 10^6$ & $134\cdot 10^6$ \\ 
   \hline 
  size in nanometers$^3$ &  $7^3$ & $13.4^3$ & $26.2^3 $ & $51.8^3$ \\
   \hline\hline 
     $n^3 $  & $4480^3$ & $8576^3$ & $16768^3$ & $33152^3$ \\ 
   \hline  
  time              &  $1.2$   &   $2.4$  &   $8.9 $   & $37.2 $   \\
\hline\hline
   $n^3 $  & $8960^3$ & $17152^3$ & $33536^3$ & $66304^3\quad (10^{14})$  \\ 
 \hline 
time               &  $1.0$    &    $4.1$  &   $18.8 $  & $84.6$   \\
\hline 
 \end{tabular}
  \end{center}
  \caption{\small Times (sec.) for calculation of the free-space collective electrostatic potential of
  charged particles   placed in nodes of a $L\times L \times L$  3D lattice 
  vs. the size  of representation grid, $n^3$.}. 
     \end{table}
The numerical implementation of the representation (\ref{eqn:EwaldTensorGl}) 
for the 3D collective electrostatic potential is extremely fast 
due to the fact that summation
% 
% The numerical implementation of the representation (\ref{eqn:EwaldTensorGl}) is very fast 
% due to the fact that summation
of the  total potential is reduced to shifting and summation of vectors. 
Table \ref{Tab:times} shows the computation times for summation of charges on large 3D 
lattices composed by nuclei of Hydrogen atoms with  an inter-atomic distance of 1 bohr (atomic unit).
For example a cluster of $512^3$ particles corresponds to a domain size $51.8^{\otimes 3}$ nanometers.
We notice, that these calculations are performed in real space (not in the frequency domain).  

Now we generalize the previous scheme to the case of variable charges.
Consider the 3D case and introduce the three-fold charge tensor ${\bf Z}=\{z_{k_1,k_2,k_3}\}$,
$k_\ell=1,\ldots,L$. Assume that tensor ${\bf Z}$ admits the rank-$R_Z$ canonical decomposition
\begin{equation}\label{eqn:charges_weight}
 {\bf Z}=\sum\limits_{m=1}^{R_Z} {\bf z}^{(1)}_m \otimes {\bf z}^{(2)}_m \otimes {\bf z}^{(3)}_m, 
 \quad {\bf z}^{(\ell)}_m\in \mathbb{R}^L.
\end{equation}
Consider the weighted electrostatic potential
\begin{equation}\label{eqn:CoulombLatt_weight}
V_{c_L}(x)=  \sum_{k_1,k_2,k_3\in {\cal K}} 
\frac{z_{\bf k}}{\|{x} -a_1 (k_1,k_2,k_3)\|}, \quad x\in \Omega_L\in \mathbb{R}^3,
\end{equation}
and the corresponding grid-based discretization  represented as the canonical tensor 
\[
  {\bf P}_{c_L}= \sum\limits_{{\bf k} \in {\cal K}^{\otimes 3}} 
  z_{\bf k} { W}_{\nu({\bf k})} \widehat{\bf P}
= \sum\limits_{k_1,k_2,k_3 \in {\cal K}} z_{\bf k} {W}_{ ({\bf k})} 
\sum\limits_{q=1}^{R}   ( \widehat{\bf p}^{(1)}_{q} \otimes \widehat{\bf p}^{(2)}_{q} 
\otimes \widehat{\bf p}^{(3)}_{q}) \in \mathbb{R}^{n\times n  \times n}.
\]

We prove that tensor ${\bf P}_{c_L}$ 
can be calculated in the low-rank canonical format and stored in almost linear cost in $L$, 
provided that (\ref{eqn:charges_weight}) holds.
\begin{theorem}\label{thm:d_lattice_Variable}
Given the rank-$R$ reference canonical tensor ${\bf P}$ representing the single Newton kernel, 
the collective interaction potential ${\bf P}_{c_L}$  % $v_{c_L} (x)$, $x\in \Omega_{L}$,
generated by a sum of weighted potentials over $L^{\otimes 3}$ rectangular lattice,  
where weights ${\bf Z}$ are given in the low-rank form (\ref{eqn:charges_weight}),
can be presented by the canonical rank $\leq R_Z R$ tensor
\begin{equation}\label{eqn:EwaldTensorGl_Coeff}
{\bf P}_{c_L}= %\sum\limits_{\nu=1}^{M_0} Z_\nu 
  \sum\limits_{m=1}^{R_Z}    \sum\limits_{q=1}^{R}
(\sum\limits_{k_1=1}^{L} z^{(1)}_{m} {\cal W}_{({k_1})} {\bf p}^{(1)}_{q}) \otimes 
(\sum\limits_{k_2=1}^{L} z^{(2)}_{m}{\cal W}_{({k_2})} {\bf p}^{(2)}_{q}) \otimes  
(\sum\limits_{k_d=1}^{L} z^{(3)}_{m} {\cal W}_{({k_3})} {\bf p}^{(3)}_{q}).
\end{equation}
The numerical cost for evaluation of the potential (\ref{eqn:EwaldTensorGl_Coeff}) 
on $n^{\otimes 3}$ tensor grid is estimated by $O( R_Z R L n)$.
\end{theorem}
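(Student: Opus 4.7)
The plan is to reduce the argument to Theorem~\ref{thm:d_lattice} by exploiting the multilinearity of the shifting-windowing construction together with the rank-$R_Z$ separation of the charge tensor ${\bf Z}$. First, I would substitute the rank-$R$ decomposition of $\widehat{\bf P}$ together with the separability ${ W}_{\nu({\bf k})} = { W}_{(k_1)} \otimes { W}_{(k_2)} \otimes { W}_{(k_3)}$ into the definition of ${\bf P}_{c_L}$, thereby rewriting it as a triple sum over $k_1,k_2,k_3\in{\cal K}$ and an inner rank sum over $q=1,\dots,R$ of tensor products of shifted directional vectors, each one scaled by the scalar weight $z_{\bf k}$.

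Next, I would insert the rank-$R_Z$ decomposition $z_{\bf k} = \sum_{m=1}^{R_Z} z^{(1)}_{m,k_1}\, z^{(2)}_{m,k_2}\, z^{(3)}_{m,k_3}$ and use distributivity to attach each scalar factor $z^{(\ell)}_{m,k_\ell}$ to the corresponding shifted vector ${\cal W}_{(k_\ell)} \widehat{\bf p}^{(\ell)}_q$ in the $\ell$-th mode. Because the charge weight then factorises across modes and the windowing operator acts independently in each direction, the triple sum over $(k_1,k_2,k_3)$ decouples into a product of three one-dimensional sums, one per mode. Pulling the outer sums over $m$ and $q$ to the front produces exactly the canonical representation \eqref{eqn:EwaldTensorGl_Coeff}, with a total of at most $R_Z R$ rank-one terms.

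Finally, I would tally the arithmetic cost. For each fixed pair $(m,q)$ the assembly of the three mode vectors requires summing $L$ shifted copies of a length-$n$ vector per mode, at cost $O(Ln)$ per mode; over $R_Z R$ pairs and the three modes this yields the claimed $O(R_Z R L n)$ bound (and the obvious $O(d R_Z R L n)$ estimate in the $d$-dimensional analogue, by the same argument applied mode-by-mode).

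The main obstacle here is bookkeeping rather than conceptual depth: one has to make sure that after distributing the scalar weights the resulting terms strictly conform to the canonical format, i.e.\ that each outer-product factor depends only on its own mode index, and that the linearity of ${\cal W}_{(k_\ell)}$ in the $\ell$-th mode is preserved throughout the interchange of finite sums. Once this factorisation is made precise, the rank bound $\mathrm{rank}({\bf P}_{c_L}) \le R_Z R$ and the complexity estimate follow immediately, and the proof is in effect a direct generalisation of the one underlying Theorem~\ref{thm:d_lattice}, with the uniform charge $Z$ replaced by the mode-wise factors $z^{(\ell)}_{m}$.
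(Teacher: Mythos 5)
Your proposal is correct and matches the paper's own proof: both insert the rank-$R_Z$ CP factorisation of $z_{\bf k}$ into the assembled sum, use the separability of the shift-and-windowing operator to attach each mode-wise charge factor to its corresponding shifted directional vector, and then decouple the triple lattice sum into three one-dimensional sums, yielding the rank-$\le R_Z R$ canonical representation. Your explicit indexing $z^{(\ell)}_{m,k_\ell}$ is in fact clearer than the paper's abbreviated notation $z^{(\ell)}_m$ inside the $k_\ell$-sums, and your cost tally is the same.
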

\begin{proof}
 Recall that the separable 3D shift operator is defined by $
{\cal W}_{({\bf k})}={\cal W}_{ (k_1)}^{(1)}\otimes {\cal W}_{ (k_2)}^{(2)}
\otimes {\cal W}_{ (k_3)}^{(3)}$,   ${\bf k}\in \mathbb{Z}^{L\times L\times L}$.
Now the representation (\ref{eqn:charges_weight}) implies
\[
\begin{split}
 {\bf P}_{c_L} &= \sum\limits_{q=1}^{R} 
 \sum\limits_{k_1,k_2,k_3=1}^{L}  z_{\bf k}
   {\cal W}_{ (k_1)}^{(1)} {\bf p}^{(1)}_{q} \otimes {\cal W}_{ (k_2)}^{(2)}{\bf p}^{(2)}_{q} 
 \otimes {\cal W}_{ (k_3)}^{(3)} {\bf p}^{(3)}_{q}  \\
%\begin{split}
 %{\bf P}_{c_L} 
 & = 
  \sum\limits_{q=1}^{R}  \sum\limits_{m=1}^{R_Z}  \left[
(\sum\limits_{k_1=1}^L z^{(1)}_{m} {\cal W}_{ ({k_1})} {\bf p}^{(1)}_{q}) \otimes
 (\sum\limits_{k_2,k_3=1}^L z^{(2)}_{m} {\cal W}_{ ({k_2})} {\bf p}^{(2)}_{q} 
\otimes z^{(3)}_{m} {\cal W}_{ ({k_3})} {\bf p}^{(3)}_{q})\right]\\
 &= \sum\limits_{q=1}^{R} \sum\limits_{m=1}^{R_Z}
 (\sum\limits_{k_1=1}^L z^{(1)}_{m} {\cal W}_{ ({k_1})} {\bf p}^{(1)}_{q}) \otimes 
(\sum\limits_{k_2=1}^L  z^{(2)}_{m} {\cal W}_{ ({k_2})} {\bf p}^{(2)}_{q}) \otimes 
(\sum\limits_{k_3=1}^L  z^{(3)}_{m}{\cal W}_{ ({k_3})} {\bf p}^{(3)}_{q}).
\end{split}
\]
This proves the theorem. 
\end{proof}

This theorem can be easily generalized to the $d$-dimensional case, cf. Theorem \ref{thm:d_lattice}.

% 
% The numerical cost and storage size for rectangular 3D lattices are bounded by 
% $O(R_Z R L n )$ and $O(R_Z R n)$, respectively, where  $n$  is the univariate grid size 
% (the similar for the $d$-dimensional lattices).

% Figure \ref{fig:Assembl_can}  presents the assembled canonical
% vectors for the cluster of $24 \times 12\times 2$ Hydrogen nuclei, with a distance
% $1.4$ bohr between nuclei and the cross-section of its collective electrostatic potential.

We conclude that the presented method of grid-based assembled tensor summation of 
the weighted electrostatics 
 potentials represented on $L^{\otimes d} $ lattice can be performed at 
 the numerical cost and storage size bounded by 
$O(d R_Z R L n )$ and $O(d R_Z R n)$, respectively.
 
 \begin{figure}[tbh]
\centering  
\includegraphics[width=7cm]{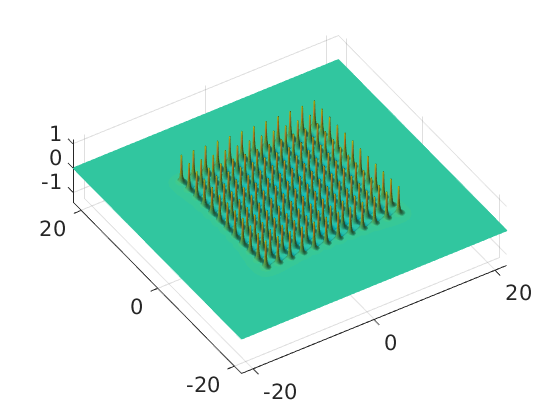}\quad 
\includegraphics[width=7cm]{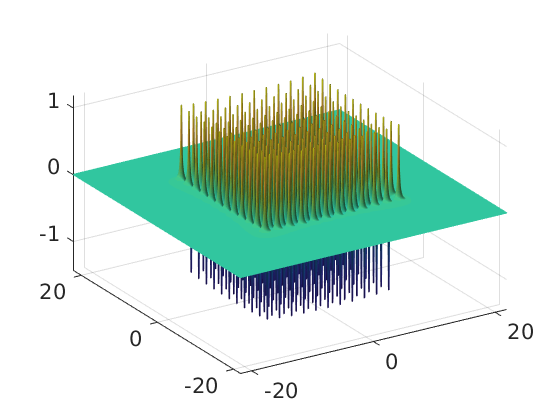}\\
\includegraphics[width=6cm]{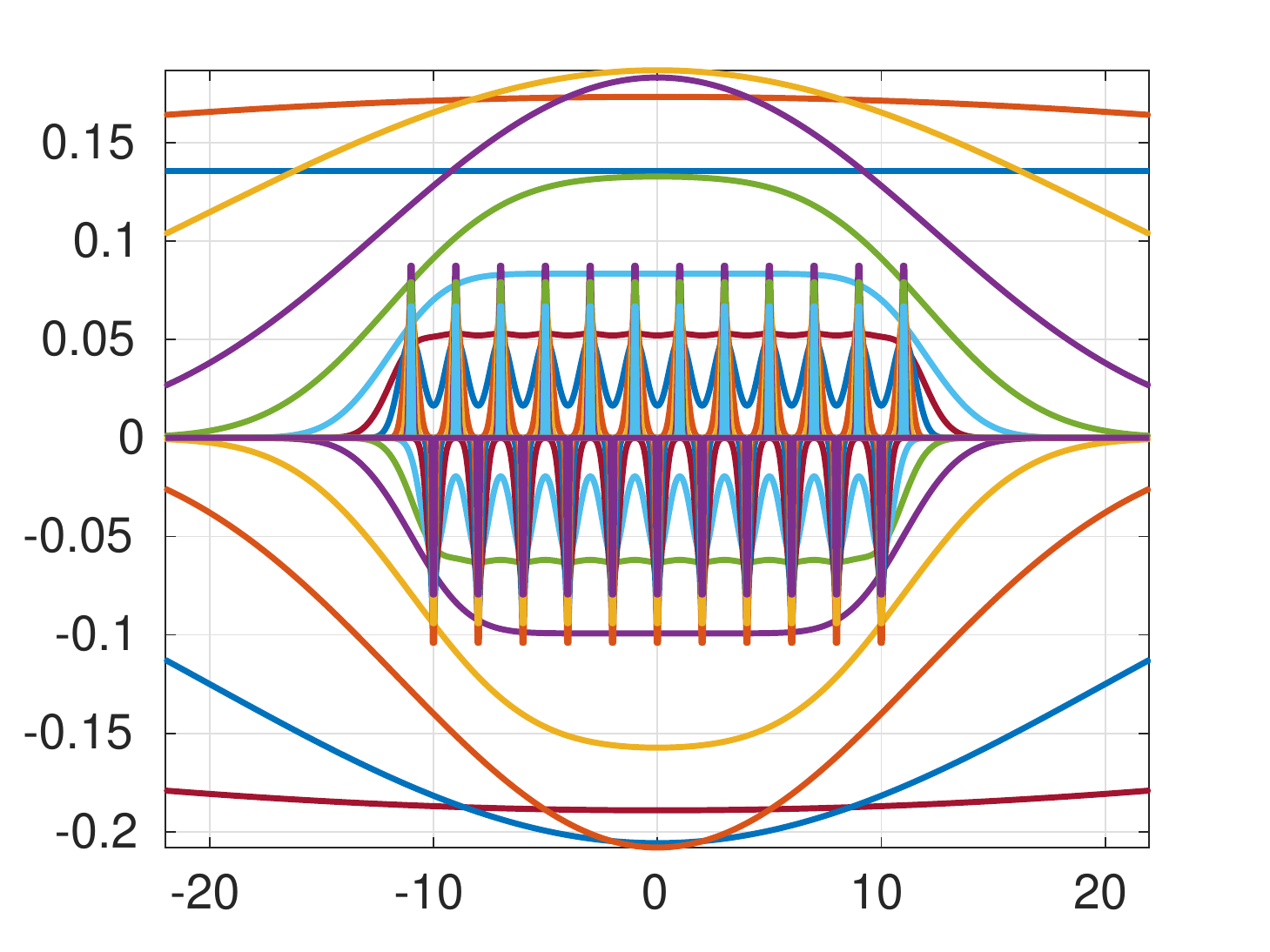}\quad
\includegraphics[width=6cm]{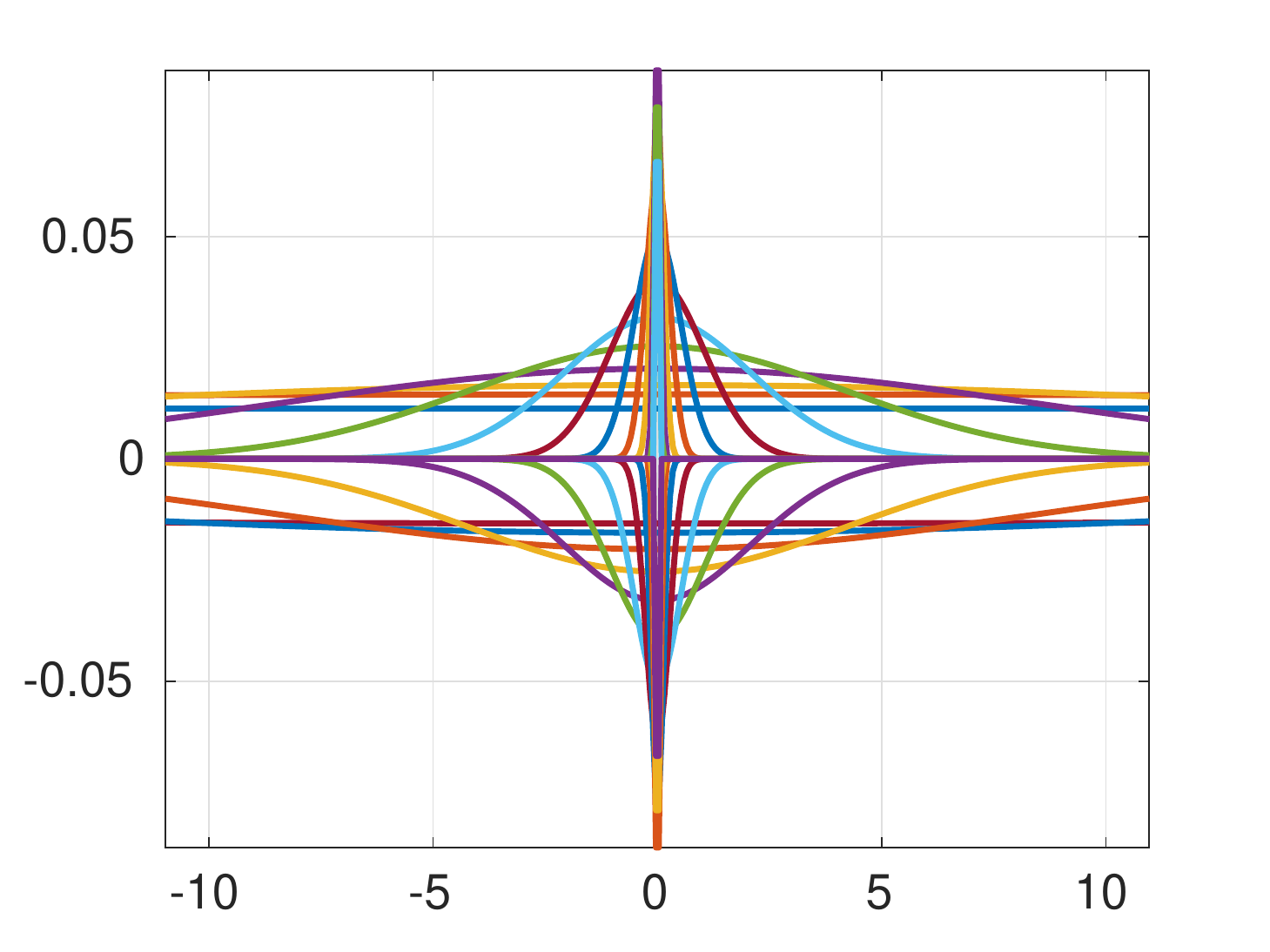}
\caption{\small Collective 3D electrostatic potential  modeling one-layer dipole-type   system 
including 
$12\times 12 \times 1$ positive and $11\times 11\times 1$ negative charges. Bottom figures show the
$x$- and $z$-axis canonical vectors for this cluster.} 
\label{fig:Assembl_dip}  
\end{figure}
Figure \ref{fig:Assembl_dip} presents the electrostatic potential of a dipole-type
lattice with positive and negative charges mixed in a checkerboard order, see 
top figures. In this example the charge configuration is represented by rank-$2$
tensor array. Bottom figures show shapes of the assembled canonical vectors 
resulted by the tensor-based summation of the potentials on a lattice.
One can distinguish separate picks for positive and negative contributions.
The values of the negative charges are chosen to provide  a neutral system.

 \subsection{ Interaction energy of charged particles on 3D lattices }
 \label{ssec:lattice_energy}
 
   Notice that the traditional approaches, like Ewald-type summation \cite{Ewald:27}
 exhibit $O(L^3 \log L)$ complexity for the energy calculations 
 on $L\times L \times L$ 3D lattice.
 In turn calculation of the interaction energy of equally charged particles 
 in 3D case by using tensor representation of the collective potential scales linearly in $L$
 \cite{KhKh_CPC:13}.
 In this section we consider this issue in the more general setting.
 
  The interaction energy of charged particles in a $L\times L \times L$ lattice is given by 
 \[
 %V_c(x)= \sum_{i=1}^{L^3}\frac{Z_i}{\|{x} -x_i \|}, \; x\in\mathbb{R}^3 ,\quad \mbox{ and }\quad
 E_{nuc}= \frac{1}{2}\sum^{L^3}_{i=1} \sum^{L^3}_{j=1,j\neq i} \frac{z_i z_j}{\|x_i- x_j\|},
 \quad x_i, x_j\in\mathbb{R}^3,
  \]
  where $z_i$ and $z_j$ are the particle charges and $x_i$ and $x_j$ are their coordinates.
  $O(3 R L) \ll L^3 \log L$ 
Recall that the interaction energy of a systems of equal-charged particles in a lattice can be efficiently calculated
 by using the collective electrostatic potential of a lattice-type computed using the assembled
 vectors of the canonical (or Tucker) tensor representation of the Newton kernel.
 
Given $\widetilde{\bf P}_{c_L}$ that is the trace 
of the (rescaled by the factor $ h^3$) collective potential ${\bf P}_{c_L}$
onto the lattice points, 
and denote the all-ones tensor of the same size by ${\bf 1}$,
then the energy sum with accuracy $O(h^2)$ is computed as (see \cite{KhKh_CPC:13})
\[
E_{L,T} = \frac{Z^2}{2} (\langle \widetilde{\bf P}_{c_L}, {\bf 1}\rangle  - 
 \sum\limits_{{\bf k}\in {\cal K}} {\bf P}_{|x_{\bf k}=0} ),
\]
where $Z$ is the point charge.
Finally, by introducing the rank-$1$ tensor ${\bf P}_{0 L} ={\bf P}_{|x_{\bf k}=0}{\bf 1}$,
we represent the interaction energy $E_{L,T}$  by a simple tensor operation
\begin{equation}\label{eqn:EnergyLattice}
 E_{L,T}  = \frac{Z^2 }{2} (\langle \widetilde{\bf P}_{c_L}, {\bf 1}\rangle - 
\langle {\bf P}_{0 L}, {\bf 1}\rangle),
\end{equation}
which can be implemented in $O(3 R L) \ll L^3 \log L$ complexity. Indeed, Theorem \ref{thm:d_lattice}
implies that the tensor $\widetilde{\bf P}_{c_L}$ has the canonical rank not larger than $R$.

Likewise, in the case of variable charges, Theorem \ref{thm:d_lattice_Variable} justifies that
the canonical rank of the corresponding collective potential $\widetilde{\bf P}_{c_L}$ 
does not exceed $R_Z R$. Hence, in this case the representation (\ref{eqn:EnergyLattice}) 
takes the form
\begin{equation}\label{eqn:EnergyLattice_variable}
 E_{L,T}  = \frac{1}{2} (\langle \widetilde{\bf P}_{c_L}, {\bf Z}\rangle - 
 {\bf P}_{|x_{\bf k}=0}\langle {\bf Z},{\bf Z} \rangle),
\end{equation}
which can be evaluated in $O(3 R_Z R L)$ operations.
The above arguments prove the following result.
\begin{theorem}\label{thm:d_lattice_Var_energy}
Given the tensor $\widetilde{\bf P}_{c_L}$, in the case of constant charges 
on the $L\times L \times L$-lattice the interaction energy 
of the lattice-structured system is calculated by (\ref{eqn:EnergyLattice}) in the linear 
cost in $L$, $O(3 R L)$. In the case of variable charge distributions with  
the charges given by a rank-$R_Z$ tensor, the interaction energy of the lattice-structured system is calculated by
(\ref{eqn:EnergyLattice_variable}) at the expense $O(3 R_Z R L)$.
\end{theorem}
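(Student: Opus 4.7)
The plan is to reduce the claim to the observation that the inner product of two canonical tensors of ranks $r_1, r_2$ with mode size $L$ and order $d$ can be computed in $O(d\, r_1 r_2\, L)$ operations, by exploiting separability:
\[
\Bigl\langle \sum_{q=1}^{r_1} \bigotimes_{\ell=1}^d {\bf u}^{(\ell)}_q,\;\; \sum_{m=1}^{r_2} \bigotimes_{\ell=1}^d {\bf v}^{(\ell)}_m\Bigr\rangle
 = \sum_{q=1}^{r_1}\sum_{m=1}^{r_2} \prod_{\ell=1}^d \langle {\bf u}^{(\ell)}_q, {\bf v}^{(\ell)}_m\rangle.
\]
Each mode-wise inner product costs $O(L)$, so the total cost is $O(d\, r_1 r_2\, L)$.

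First I would treat the constant-charge case. By Theorem \ref{thm:d_lattice}, the full potential tensor ${\bf P}_{c_L}$ is canonical of rank $R$ on the fine $n^{\otimes 3}$ grid. Its restriction to the $L^{\otimes 3}$ lattice points is obtained by applying to each of the three directional factors a mode-wise subsampling operator, which preserves the canonical format and does not increase the rank. Hence $\widetilde{\bf P}_{c_L}$ is canonical of rank $\leq R$ with mode size $L$. The all-ones tensor ${\bf 1}$ is rank-$1$, so by the formula above the evaluation of $\langle \widetilde{\bf P}_{c_L}, {\bf 1}\rangle$ in (\ref{eqn:EnergyLattice}) reduces to $R$ products of three mode-wise dot products of length $L$, costing $O(3RL)$. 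The second term $\langle {\bf P}_{0L}, {\bf 1}\rangle$ is a rank-$1$ inner product, hence negligible. This gives the claimed $O(3RL)$ bound.

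Next I would extend to the variable-charge case. Theorem \ref{thm:d_lattice_Variable} asserts that $\widetilde{\bf P}_{c_L}$ is canonical of rank at most $R_Z R$ on the lattice, and by hypothesis ${\bf Z}$ is canonical of rank $R_Z$ with the same mode size $L$. Applying the separable inner-product formula to these two canonical tensors yields $R_Z\cdot R_Z R$ scalar products of vectors of length $L$, and thus evaluates $\langle \widetilde{\bf P}_{c_L}, {\bf Z}\rangle$ in (\ref{eqn:EnergyLattice_variable}) within $O(3 R_Z^2 R L)$ operations. The correction term $\langle {\bf Z},{\bf Z}\rangle$ costs $O(3 R_Z^2 L)$. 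To match the claimed bound $O(3 R_Z R L)$, I would further observe that the rank-$R_Z R$ representation of $\widetilde{\bf P}_{c_L}$ in (\ref{eqn:EwaldTensorGl_Coeff}) has a product structure where the $m$-index is already paired with the $m$-index of ${\bf Z}$; pairing matching indices (rather than expanding all $R_Z\times R_Z$ cross terms) gives the tighter complexity.

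The main obstacle I anticipate is bookkeeping the complexity constants correctly in the variable-charge case: one must carefully exploit the paired index structure of the factored representations, rather than naively taking the product of ranks, in order to reach $O(3 R_Z R L)$ rather than $O(3 R_Z^2 R L)$. Everything else is a direct appeal to Theorems \ref{thm:d_lattice} and \ref{thm:d_lattice_Variable} together with separability of the inner product for canonical tensors.
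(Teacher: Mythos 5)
Your approach via the separable inner-product identity for canonical tensors is essentially the same as the paper's (implicit) argument, and your treatment of the constant-charge case is correct and complete: by Theorem~\ref{thm:d_lattice} the restricted tensor $\widetilde{\bf P}_{c_L}$ has canonical rank at most $R$ with mode size $L$, $\bf 1$ is rank one, and the inner product therefore costs $O(3RL)$.

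For the variable-charge case, your complexity bookkeeping is more careful than the paper's and correctly exposes a real issue: with $\mbox{rank}(\widetilde{\bf P}_{c_L})\leq R_ZR$ and $\mbox{rank}({\bf Z})=R_Z$, the separable inner-product formula yields $O(3\,R_Z^2 R\, L)$, not the stated $O(3\,R_Z R\, L)$. However, the ``index-pairing'' remedy you propose does not work as stated. The inner product
\[
\langle \widetilde{\bf P}_{c_L},{\bf Z}\rangle
=\sum_{m=1}^{R_Z}\sum_{q=1}^{R}\sum_{m'=1}^{R_Z}\prod_{\ell=1}^{3}
\bigl\langle \widetilde{\bf u}^{(\ell)}_{mq},\,{\bf z}^{(\ell)}_{m'}\bigr\rangle
\]
necessarily involves all $R_Z\times R_Z$ cross pairs $(m,m')$; restricting to $m=m'$ would change the \emph{value} of the sum, not just the cost. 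There is no structural cancellation of the off-diagonal $(m,m')$ terms: the directional factors $\widetilde{\bf u}^{(\ell)}_{mq}$ arise from a shift-and-sum of ${\bf p}^{(\ell)}_q$ weighted by $z^{(\ell)}_{m}$, which is a convolution-like operation, not a pointwise product, so no common factor matching ${\bf z}^{(\ell)}_{m'}$ can be pulled out. As written, your proof would therefore establish $O(3\,R_Z^2 R\, L)$, which is still linear in $L$ and of the same flavor as the theorem; the additional factor $R_Z$ appears to be a minor imprecision in the paper's stated constant (harmless when $R_Z=O(1)$, as in the examples). You should either prove the bound with $R_Z^2$ and note the discrepancy explicitly, or supply a genuine argument for the tighter bound rather than the pairing heuristic, which would give a different (incorrect) answer for the energy.
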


Table \ref{Tab:energy} shows CPU times for computation of the interaction energy ($Z=1$) for several
cubic clusters of charged particles using Matlab. Time T$_{full}$ denotes the time for direct $O(L^6)$
computations, while  $T_{(L^3)}$ presents the square root of this time, to show the  $O(L^3)$
complexity (restricted to $L=48$). The column  T$_{tens.}$ shows the times for tensor-based calculations.
\begin{table}[tbh]\label{Tab:energy}
\begin{center}%
\begin{tabular}
[c]{|r|r|r|r|r|}%
\hline
$L^3$ &   T$_{full}$ $T_{(L^3)}$ & T$_{tens.}$ & $E_{L,T}$ & abs. err.  \\
% \hline
% $24^3$ &  $37$, ($6.1$)  & $1.2$ &$3.7 \cdot 10^6$ & $2 \cdot 10^{-8}$  \\
 \hline
$32^3$ &   $250$, ($15.8$) & $1.5$ &$1.5 \cdot 10^7$ & $1.5 \cdot 10^{-9}$  \\
 \hline
$48^3$ &   $3374$, ($58.8$) & $2.8$ &$1.12 \cdot 10^8$ & 0  \\
 \hline
$64^3$ &   -- & $5.7$ & $5.0  \cdot 10^8$ & --  \\
 \hline
$128^3$ &   -- & $13.5$ &$1.6 \cdot 10^{10}$ & --  \\
 \hline
$256^3$ &  -- & $68.2$ &$5.2 \cdot 10^{11}$ & --  \\
 \hline
 \end{tabular}
\end{center}%
\caption{\small Comparison of calculation times for computation of the interaction energy 
for increasing cluster size.}
\end{table}

\section{Tensor representation of multi-particle potentials}
\label{sec:Gener_Mpaticle_pot}

\subsection{How to treat systems of generally distributed particles} 
 \label{ssec:Pot_Generic}

 What if a many-particle system is not of lattice-type, but is unstructured like a protein?
 Then summation of many canonical tensors  
 \begin{equation} \label{eqn:P0_sum}
P_0(x) \mapsto  {\bf P}_0 = \sum_{\nu=1}^{N_0} {z_\nu}\, {\cal W}_\nu (\widehat{\bf P}_R).
% \sum_{\nu=1}^{N_0} {z_\nu} \, {\cal W}_\nu (\widehat{\mathbf{P}}_{R_s} + \widehat{\mathbf{P}}_{R_l}).
 %:= {\bf P}_s + {\bf P}_l.
\end{equation}  
 yields large canonical ranks, like in direct tensor summation  (see Section \ref{ssec:direct}) 
 when the number of   particles 
 increases (''a non-structured system``).
 \begin{figure}[tbh] \label{fig:Vilin_prot}
\centering   
\includegraphics[width=5.0cm]{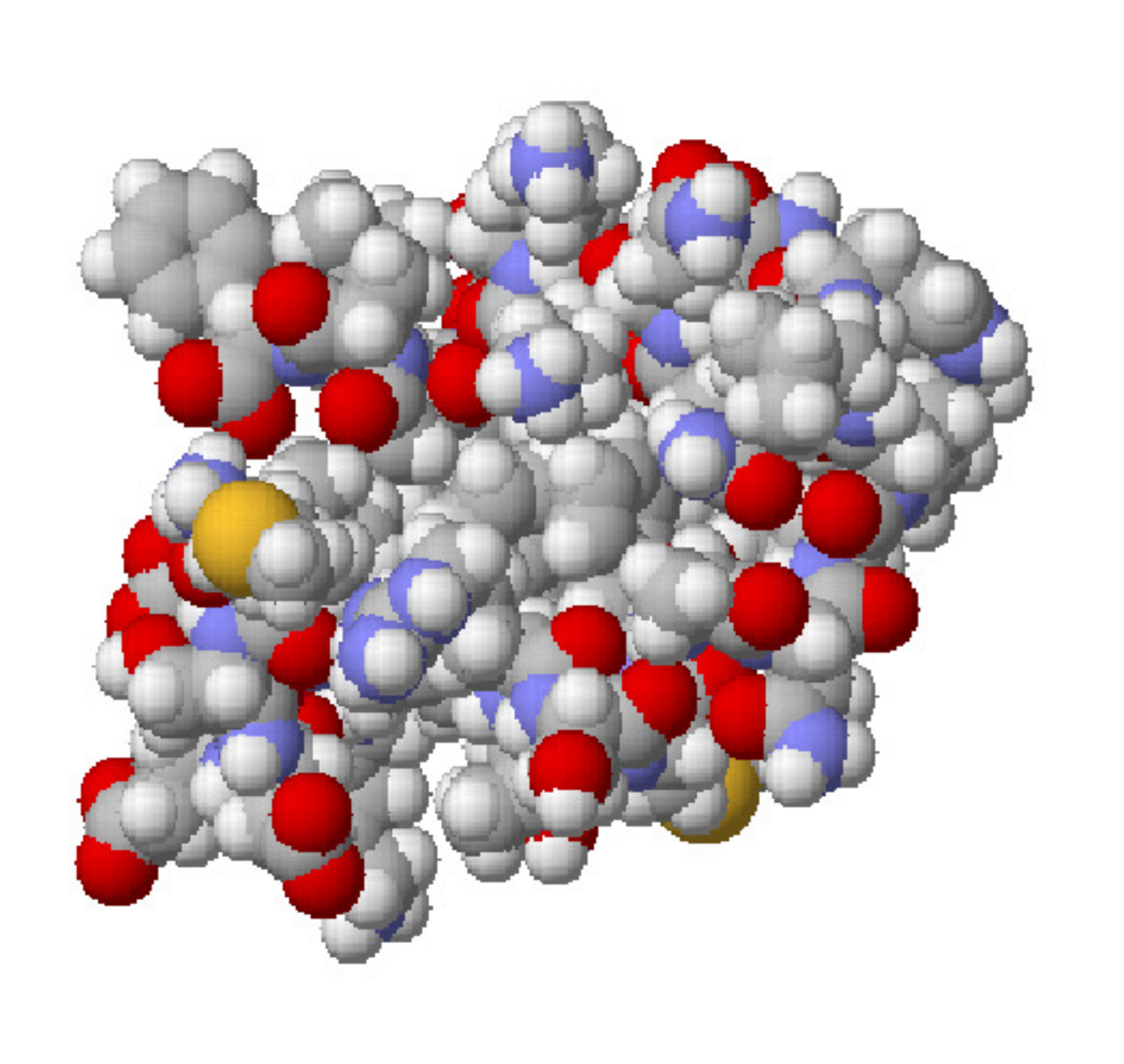}\quad \quad
\includegraphics[width=7.0cm]{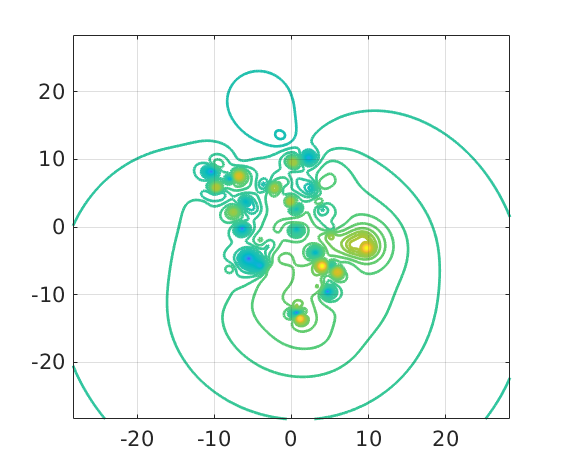} %svd_200_400_774_n1024.pdf}
\caption{\small Left: a typical structure of  moderate size   protein molecule; 
right: the equi-potential contours of the total electrostatic potential of a protein-type 
molecule with 200 nuclei visualized at  the plane with $z=0$. }
\end{figure}
 
 Figure \ref{fig:Vilin_prot}, left demonstrates the structure of a villin protein \cite{Protein_data_bank:77} 
 and the   level set for the collective electrostatic potential.
  We notice that at the lower values of potential the contours are smooth and unite large groups 
 of particles, while at the vicinity of singularities we observe high density of levels. 
  This observation advocates the idea to separate the low-level part 
 of the collective potential.
 
 Figure \ref{fig:svd_prot} illustrates  
 the mode-$1$ singular values of the RHOSVD of the side matrices
 for the full potential of the  protein-type system (left) and of its low-range parts (right)
 versus the number of particles $N_0=200,400,774$ , $n=1024$. 
 \begin{figure}[tbh] % \label{fig:svd_prot}
\centering   
\includegraphics[width=7.0cm]{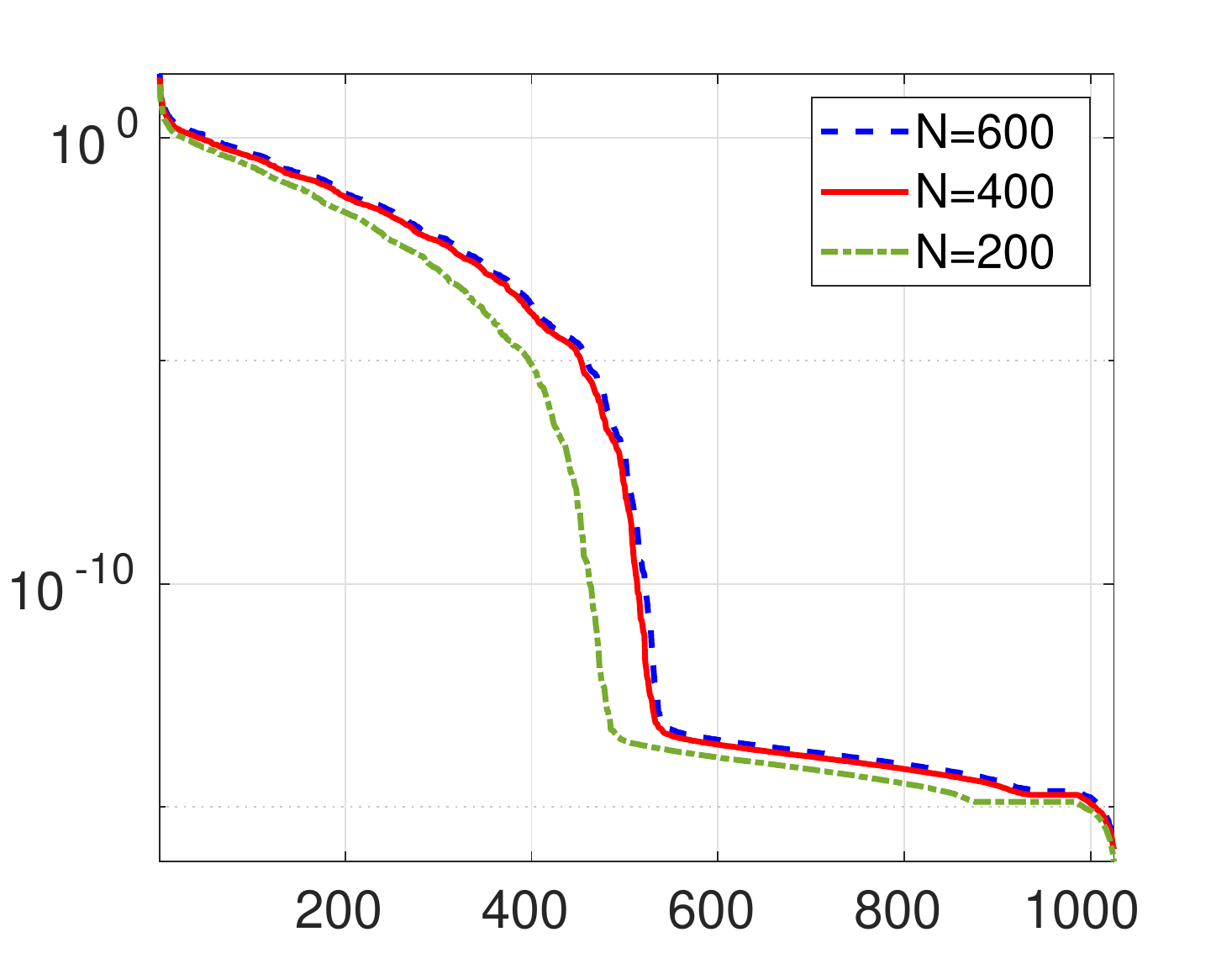}\quad \quad
\includegraphics[width=7.0cm]{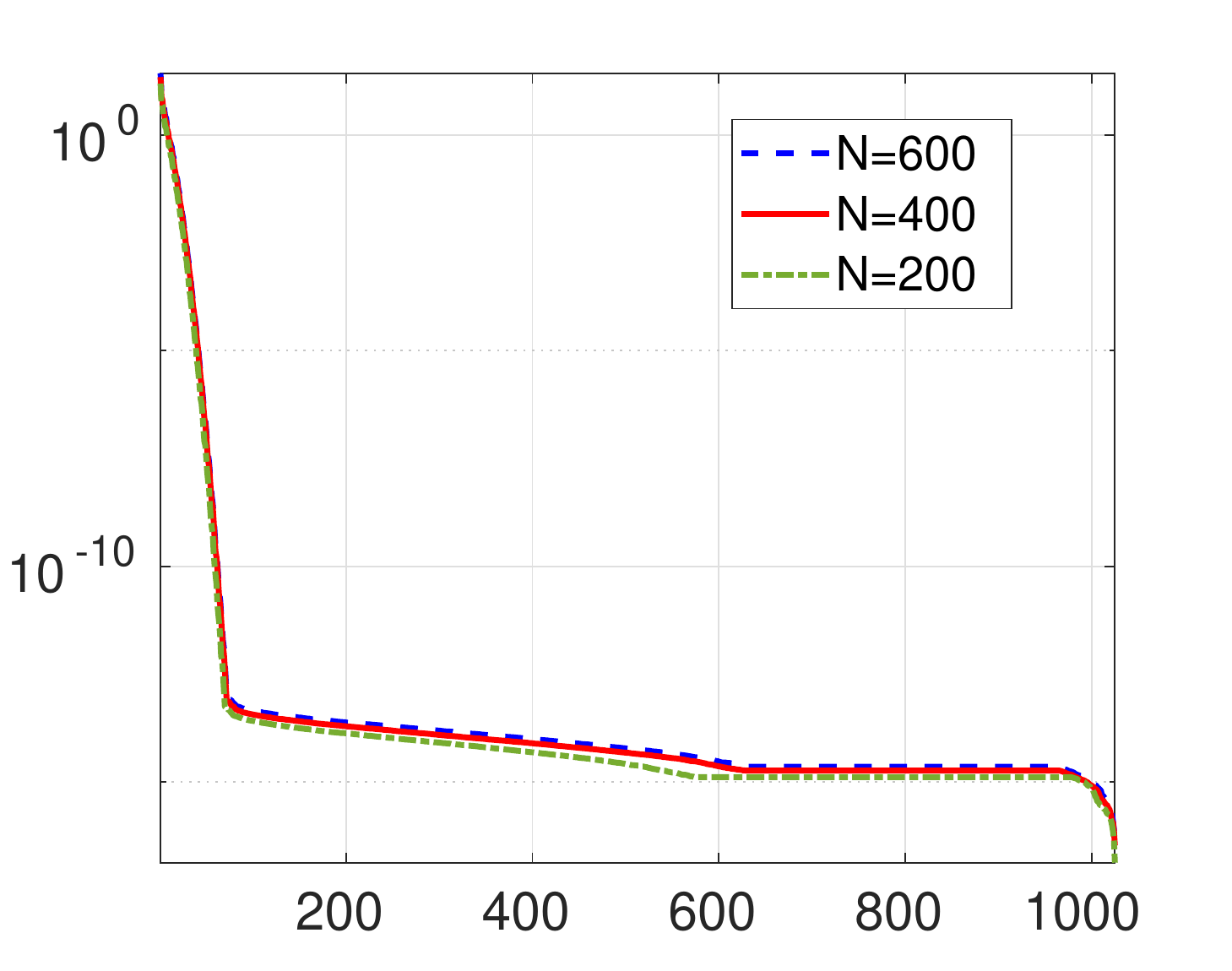} %svd_200_400_774_n1024.pdf}
\caption{\small The singular values of the  mode-$1$ RHOSVD for the large sum of Newton kernels. }
\label{fig:svd_prot}
\end{figure}
 
This example indicates that in the case of a non-structured location of particles 
the Tucker/canonical rank of the corresponding grid-based tensor representation
of the collective potential increases proportionally to the number of particles. 
The large ranks make the tensor decomposition of the full electrostatic potential of a  
the biomolecules useless. 
The application of the range separated tensor decompositions allows to get rid of this bottleneck.

 \subsection{Range-separated tensor splitting of the generating kernel}\label{ssec:S_L_split}

 From the detailed consideration of the quadrature (\ref{eqn:sinc_general}), %(\ref{eqn:hM}), 
 we can observe
that the full set of approximating Gaussians includes two classes of functions: those with 
small "effective support" and the long-range functions. Consequently, functions from different classes 
may require different tensor-based schemes for their efficient numerical treatment.
Hence, the idea of the new approach is the constructive implementation of a 
range separation scheme that allows the independent efficient treatment 
of both the long- and short-range parts in each summand in (\ref{eqn:Electrost_sum}).

In what follows, without loss of generality, we confine ourselves to the case of 
the Newton kernel in $\mathbb{R}^3$, 
that is represented by the canonical sum in (\ref{eqn:sinc_general})
with $k=0,1,\ldots,M_0$, $M_0=2M+1$.
We observe that the sequence of quadrature points $\{t_k\}$,   
can be split into two subsequences, 
$
{\cal T}:=\{t_k|k=0,1,\ldots,M_0\}={\cal T}_l \cup {\cal T}_s,
$ 
with
\begin{equation} \label{eqn:Split_Qpoints}
{\cal T}_l:=\{t_k\,|k=0,1,\ldots,R_l\}, \quad  
\mbox{and} \quad {\cal T}_s:=\{t_k\,|k=R_l+1,\ldots,M_0\}.
\end{equation}
Here
${\cal T}_l$ includes quadrature points $t_k$ condensed ``near'' zero 
(more precisely, in the interval $(0,1]$), hence generating the long-range Gaussians (low-pass filters). 
In turn, ${\cal T}_s$ accumulates the increasing in $M\to \infty$ 
sequence of ``large'' sampling points $t_k$ (more precisely, in the interval $(1,\infty)$),
with the upper bound $C_0^2 \log^2(M)$, 
corresponding to the short-range Gaussians (high-pass filters).
% Notice that the quasi-optimal choice of the constant $C_0\approx 3$ in the case of the Newton kernel 
% was determined in \cite{BeHaKh:08}.
We further denote ${\cal K}_l:=\{k\,| t_k\in {\cal T}_l \}$ and 
${\cal K}_s:=\{k\,| t_k \in {\cal T}_s \}$. 

Splitting (\ref{eqn:Split_Qpoints}) generates the additive decomposition of the canonical tensor
$\mathbf{P}_R$ onto the short- and long-range parts,
\[
 \mathbf{P}_R = \mathbf{P}_{R_s} + \mathbf{P}_{R_l},
\]
where
\begin{equation} \label{eqn:Split_Tens}
    \mathbf{P}_{R_s} =
\sum\limits_{k\in {\cal K}_s} {\bf p}^{(1)}_k \otimes {\bf p}^{(2)}_k \otimes {\bf p}^{(3)}_k, 
\quad \mathbf{P}_{R_l} =
\sum\limits_{k\in {\cal K}_l} {\bf p}^{(1)}_k \otimes {\bf p}^{(2)}_k \otimes {\bf p}^{(3)}_k.
\end{equation}

The choice of the critical number $R_l=\# {\cal T}_l -1$ (or equivalently, $R_s=\# {\cal T}_s = M-R_l$), 
that specifies the splitting ${\cal T}={\cal T}_l \cup {\cal T}_s$,
is determined by the \emph{active support} %$\sigma >0$ 
of the short-range components 
such that one can cut off the functions ${\bf p}_k(x)$, $t_k\in {\cal T}_s$, 
outside of the sphere $B_{\sigma}$ of radius ${\sigma}>0$, 
subject to a certain threshold $\delta>0$.
Fixed $\delta>0$, the choice of $R_s$ is  defined by 
the (small) parameter $\sigma$ and vise versa, see  \cite{BKK_RS:17} for further details.

For example, in electronic structure calculations, the parameter $\sigma$ can be associated with the 
typical inter-atomic distance in the molecular system of interest.
\begin{figure}[htb]
\centering
\includegraphics[width=7.0cm]{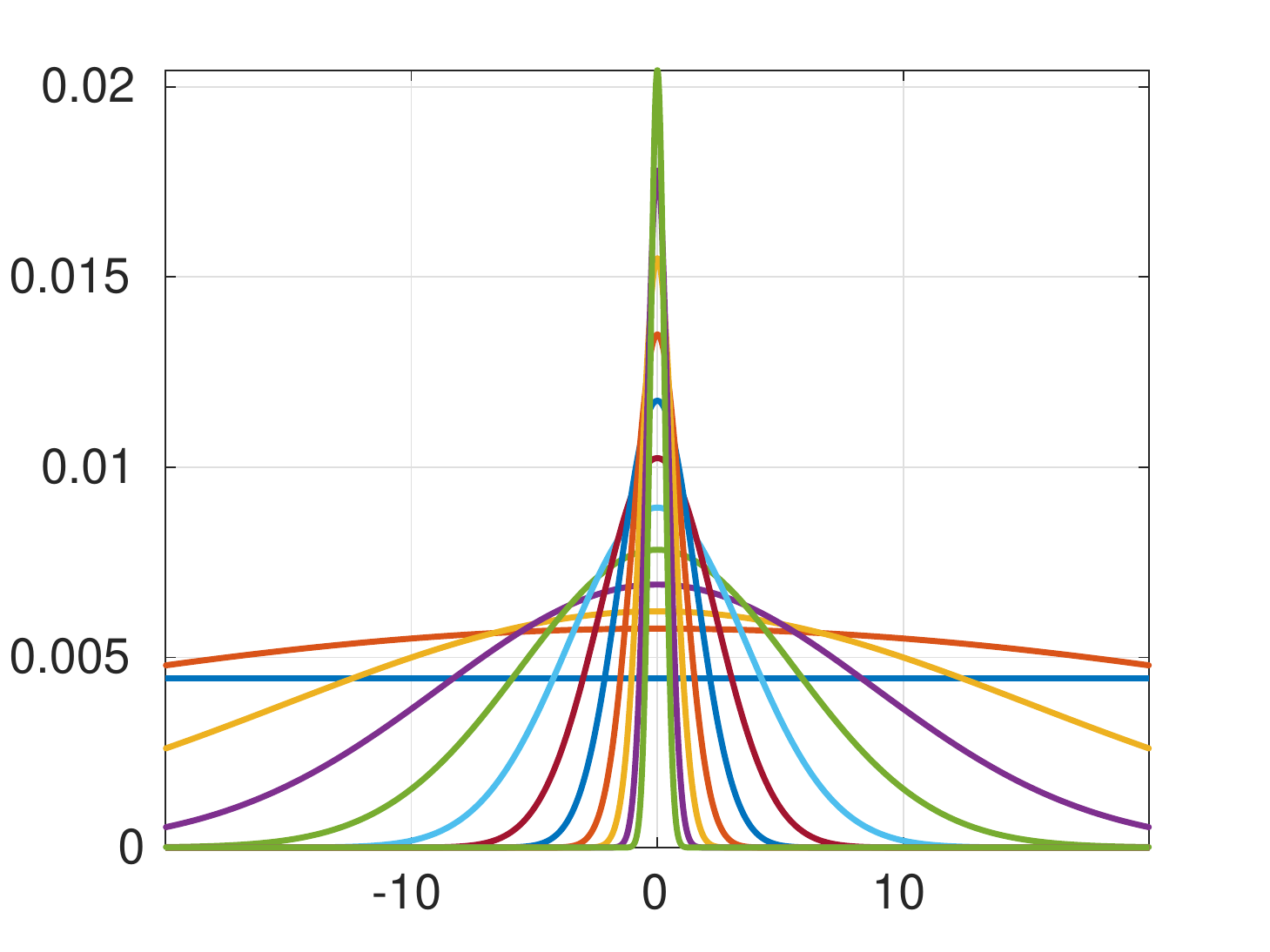} \quad
\includegraphics[width=7.0cm]{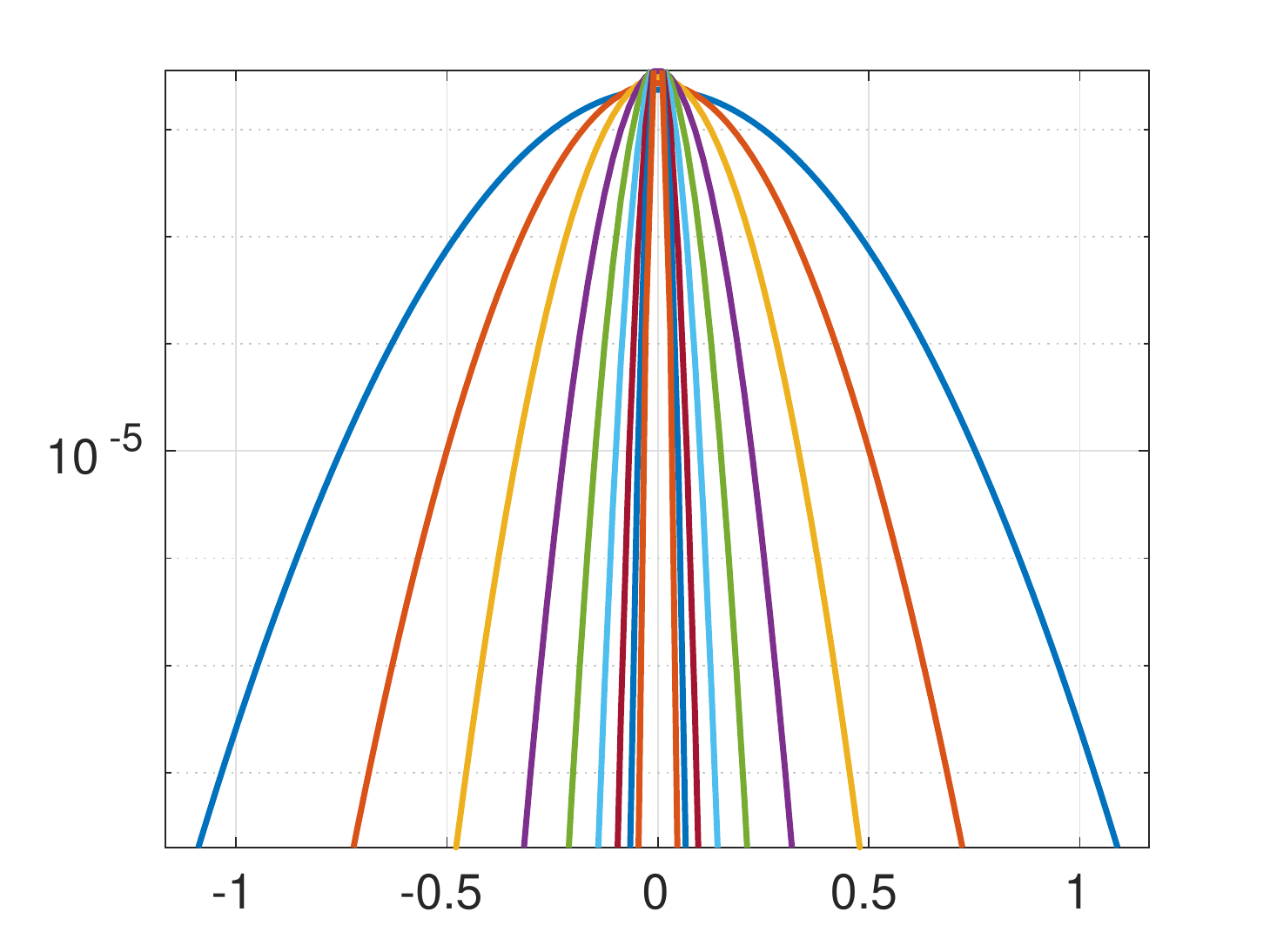}
 \caption{\small Long-range vectors (left) and short-range (in logarithmic scale) 
canonical vectors for the Newton kernel with $n=1024$, $R=24, R_l=13$. }
\label{fig:1024_rl12}
\end{figure}

Figure \ref{fig:1024_rl12}   % and \ref{fig:1024_rs8}  
illustrates the splitting (\ref{eqn:Split_Qpoints}) for the tensor 
${\bf P}_R= {\bf P}_{R_l} + {\bf P}_{R_s}$ in (\ref{eqn:Split_Tens})
represented on 
the $n\times n\times n$ grid with the parameters $R=20, R_l=12$ and $R_s=8$, 
respectively (cf. \cite{BKK_RS:17}).
% Following criteria (A) with $\delta \approx 10^{-4}$, the effective support 
% for this splitting is determined by $\sigma =0.9$.
The canonical vectors of the discretized Newton kernel depicted in Figure \ref{fig:1024_rl12}, left,
exhibit long-range behavior, 
while the corresponding vectors of the tensor ${\bf P}_{R_s}$ decay exponentially fast 
apart of the effective support, see Figure \ref{fig:1024_rl12}, right.

\begin{figure}[htb]
\centering
\includegraphics[width=7.0cm]{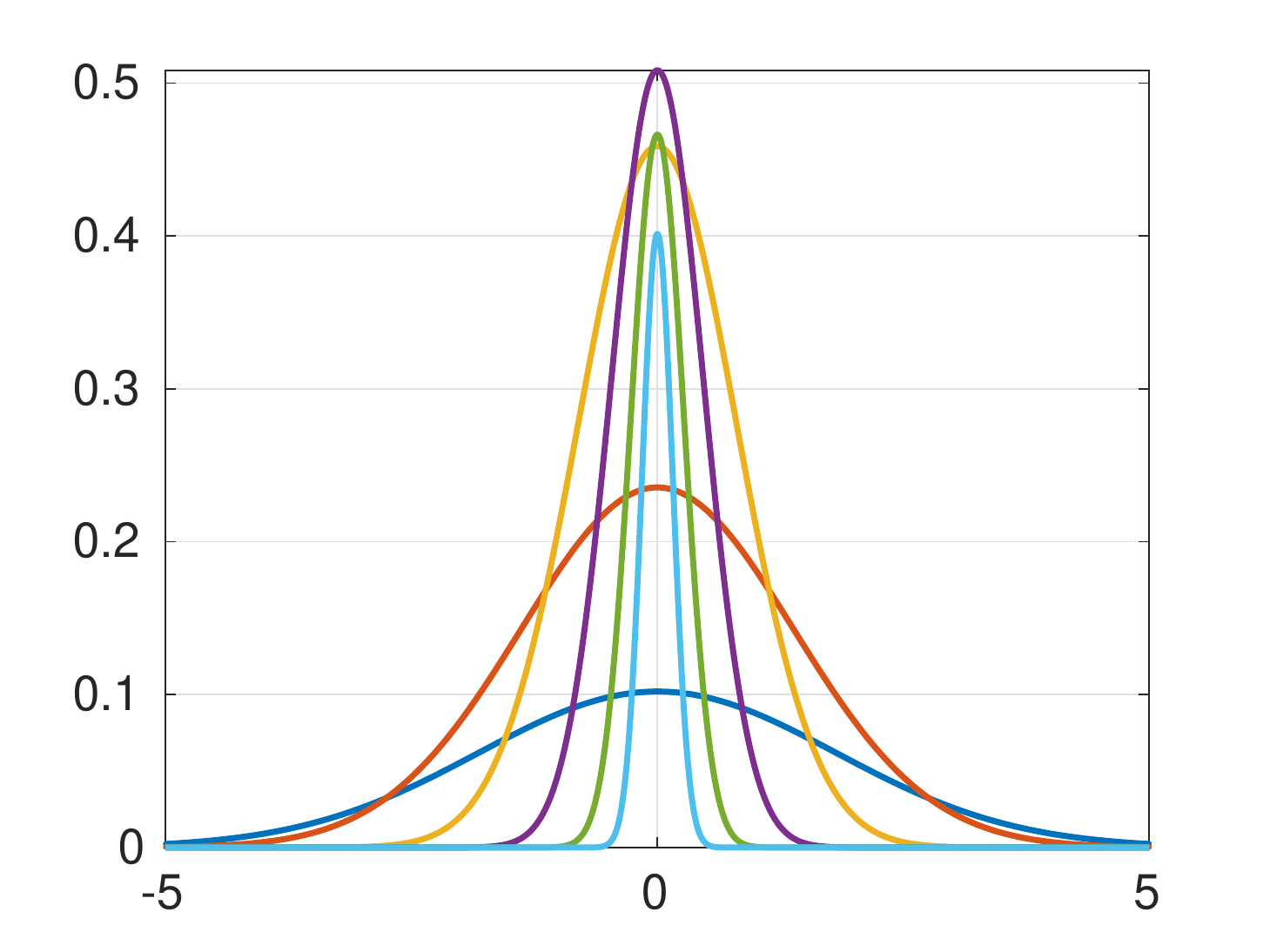} \quad
\includegraphics[width=7.0cm]{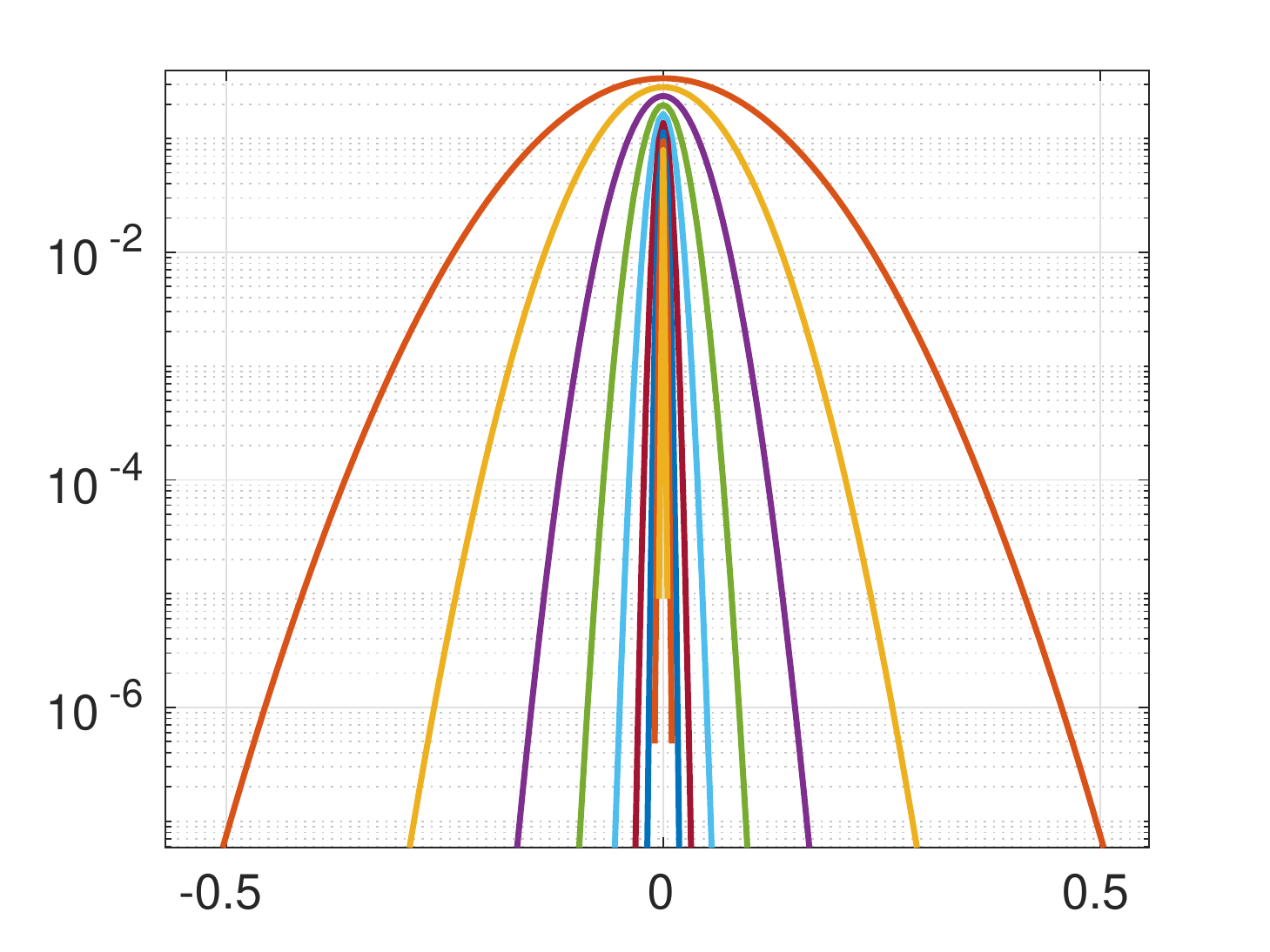}
 \caption{\small Long-range (left) and short-range (right) 
mode-1 canonical vectors for the 3D Slater function with $n=1024$, $R=24, R_l=6$. 
Short-range vectors are shown in logarithmic scale. }
\label{fig:1024_rl6_Slat}
\end{figure}

Figure \ref{fig:1024_rl6_Slat} illustrates the splitting (\ref{eqn:Split_Qpoints}) for the tensor 
${\bf P}_R= {\bf P}_{R_l} + {\bf P}_{R_s}$ in (\ref{eqn:Split_Tens})
representing the Slater potential $P(x)= e^{-\|x\|}$
on the $n\times n\times n$ grid with the parameters $R=24, R_l=6$ and $R_s=18$.
Notice that for this radial function the long-range part (Figure \ref{fig:1024_rl6_Slat}, left)
includes much less canonical vectors compared with the case of Newton kernel.
This anticipates the smaller total canonical rank for the long-range part 
in the large sum of Slater-like potentials arising, for example, in the representation 
of molecular orbitals and the electron density 
in electronic structure calculations. For instance, the wave function for the Hydrogen atom is 
given by the Slater function $e^{-\mu \|x\|}$, $x\in\mathbb{R}^3 $.

The advantage of the range separation in the splitting of 
the canonical tensor 
%$${\bf P}_R \mapsto {\bf P}_{R_s} + {\bf P}_{R_l}$$  
in  (\ref{eqn:Split_Tens}) is due to the opportunity for 
independent tensor representations of both sub-tensors ${\bf P}_{R_s}$ and ${\bf P}_{R_l}$ 
providing the separate grid-based treatment of the short- and long-range parts in the total sum of 
many pointwise interaction potentials as in (\ref{eqn:Electrost_sum}).

 \subsection{Brief introduction to the RS tensor format}   \label{ssec:RS_format}
 
 The novel range separated (RS) tensor format has been recently introduced in 
 \cite{BKK_RS:17}.
 This format is well suited for modeling of
 the long-range interaction potential in multi-particle systems, as well as for the 
 low-parametric interpolation of the scattered data. 
  It is based on the partitioning of the tensor representation of the reference kernel
  (usually the radial basis function)  into long- and short-range parts. 
  
 First, we recall the general definition of the RS tensor format
 \begin{definition}\label{Def:RS-Can_format} (RS-canonical tensors 
 \cite{BKK_RS:17}). 
 Given the separation parameter $\gamma \in \mathbb{N}$ and a set of points $x_\nu \in \mathbb{R}^{d}$,
 $\nu=1,\ldots,N$,
 the RS-canonical tensor format specifies the class of $d$-tensors 
 ${\bf A}  \in \mathbb{R}^{n_1\times \cdots \times n_d}$
 which can be represented as a sum of a rank-${R}_L$ canonical tensor  
 \begin{equation}\label{eqn:RL_Can}
{\bf A}_{R_L} = {\sum}_{k =1}^{R_L} \xi_k {\bf a}_k^{(1)} \otimes \cdots \otimes {\bf a}_k^{(d)}
\in \mathbb{R}^{n_1\times ... \times n_d}
\end{equation}
and a cumulated canonical tensor 
 $
 \widehat{\bf A}_S={\sum}_{\nu =1}^{N} c_\nu {\bf A}_\nu ,   
$
generated by replication of the reference tensor ${\bf A}_0$ to the predefined points 
$x_\nu$, $\nu=1,\ldots,N$.
Then the RS canonical tensor is parametrized by
\begin{equation}\label{eqn:RS_Can}
 {\bf A} =  {\bf A}_{R_L} + \widehat{\bf A}_S=
 {\sum}_{k =1}^{R_L} \xi_k {\bf a}_k^{(1)}  \otimes \cdots \otimes {\bf a}_k^{(d)} +
 {\sum}_{\nu =1}^{N} c_\nu {\bf A}_\nu, 
\end{equation}
where $\mbox{rank}({\bf A}_0)\leq R_0$ and $\mbox{diam}(\mbox{supp}{\bf U}_0)\leq 2 \gamma$ 
in the index size.
\end{definition}
The storage size for  the  RS-canonical tensor ${\bf A}$ in (\ref{eqn:RS_Can}) 
is estimated by (\cite{BKK_RS:17}, Lemma 3.9),
\[
\mbox{stor}({\bf A})\leq d R n + (d+1)N + d R_0 \gamma.
\]

The RS-Tucker tensor format is defined similar to Definition \ref{Def:RS-Can_format}
such that the rank-$R_L$ canonical tensor (\ref{eqn:RL_Can}) is substituted by the 
rank-$(r_1,\ldots,r_d)$ Tucker tensor.

The RS-canonical and Tucker tensor formats provide the powerful tool for data-sparse rank-structured 
representation of many-particle electrostatic potentials. In what follows we consider the 
basic example of the free space electrostatic potential of large system of charged particles.

According to the tensor canonical representation of the Newton kernel (\ref{eqn:sinc_general}) 
as a sum of Gaussians, one can distinguish their supports into the short- and long-range parts,
$
 \mathbf{P}_R = \mathbf{P}_{R_s} + \mathbf{P}_{R_l},
$
given by (\ref{eqn:Split_Tens}).
Then the RS splitting (\ref{eqn:Split_Tens}) is 
applied to the reference canonical tensor ${\bf P}_R$ 
and to its accompanying version $\widetilde{\bf P}_R=[\widetilde{p}_R(i_1,i_2,i_3)]$, 
$i_\ell \in \widetilde{I}_\ell$, $\ell=1,2,3$ living on the double size grid with the 
same mesh width, such that
\[
 \widetilde{\bf P}_R = \widetilde{\mathbf{P}}_{R_s} + \widetilde{\mathbf{P}}_{R_l} 
 \in \mathbb{R}^{2n \times 2n \times 2n}.
\]

  The total electrostatic potential $P_0(x)$ in (\ref{eqn:P0_sum}) is represented by a projected tensor 
${\bf P}_0\in \mathbb{R}^{n \times n \times n}$ that can 
be constructed by a direct sum of shift-and-windowing transforms of the reference 
tensor $\widetilde{\bf P}_R$ (see \cite{KhKh_CPC:13} for more details),
\begin{equation}\label{eqn:Total_Sum}
 {\bf P}_0 = \sum_{\nu=1}^{N} {z_\nu}\, {\cal W}_\nu (\widetilde{\bf P}_R)=
 \sum_{\nu=1}^{N} {z_\nu} \, {\cal W}_\nu (\widetilde{\mathbf{P}}_{R_s} + \widetilde{\mathbf{P}}_{R_l})
 =: {\bf P}_s + {\bf P}_l.
\end{equation}
The shift-and-windowing transform ${\cal W}_\nu$ maps a reference tensor 
$\widetilde{\bf P}_R\in \mathbb{R}^{2n \times 2n \times 2n}$ onto its sub-tensor 
of smaller size $n \times n \times n$, obtained by first shifting the center of
the reference tensor $\widetilde{\bf P}_R$ to the grid-point $x_\nu$ and then restricting 
(windowing) the result onto the computational grid $\Omega_n$.
  
The disadvantage of the tensor representation   (\ref{eqn:Total_Sum})  is
  that the number of terms in the canonical representation of the full tensor sum ${\bf P}_0$
increases almost proportionally to the number $N$ of particles in the system.
%And it is not tractable for the rank reduction \cite{BKK_RS:16}.

The remedy was found in \cite{BKK_RS:17} by considering  the global tensor 
decomposition of only the  "long-range part" in the tensor ${\bf P}_0$, defined by
\begin{equation}\label{eqn:Long-Range_Sum} %{eqn:Total_Sum}
 {\bf P}_l = \sum_{\nu=1}^{N} {z_\nu} \, {\cal W}_\nu (\widetilde{\mathbf{P}}_{R_l})=
 \sum_{\nu=1}^{N} {z_\nu} \, {\cal W}_\nu 
 (\sum\limits_{k\in {\cal K}_l} \widetilde{\bf p}^{(1)}_k \otimes \widetilde{\bf p}^{(2)}_k 
 \otimes \widetilde{\bf p}^{(3)}_k).
 \end{equation}
 The tensor representation of the sum of short-range parts is given by a sum of cumulative 
 tensors of small support (and small size), accomplished by the list of the 3D
 potentials coordinates.

 In application to the calculation of multi-particle interaction potentials  discussed above
 we associate the tensors ${\bf P}_s$ and ${\bf P}_l$ in (\ref{eqn:Total_Sum}) 
 with short- and long-range components ${\bf A}_{R_L}$ and  $\widehat{\bf A}_S$ in the RS 
 representation of the collective electrostatic potential ${\bf P}_0$.
 The following theorem proofs the almost uniform in $N$ bound on the Tucker (and canonical) 
 $\varepsilon$-rank of the tensor ${\bf A}_{R_L}={\bf P}_l$, representing the long-range part of ${\bf P}_0$.
  \begin{theorem}\label{thm:Rank_LongRange}
 (Uniform rank bounds for the long-range part,  \cite{BKK_RS:17}).
Let the long-range part ${\bf P}_l$ in the total interaction potential, see (\ref{eqn:Long-Range_Sum}),
correspond to the choice of splitting parameter 
in (\ref{eqn:Split_Qpoints}) with $M_0=O(\log^2\varepsilon)$.
Then the total $\varepsilon$-rank ${\bf r}_0$ of the Tucker approximation to the canonical tensor sum ${\bf P}_l$
is bounded by
\begin{equation}\label{eqn:Rank_LongR}
 {\bf r}_0:=\mbox{rank}_{Tuck}({\bf P}_l)=C\, b \,\log^{3/2} (|\log (\varepsilon/N)|),
\end{equation}
where the constant $C$ does not depend on the number of particles $N$, 
as well as on the size of the computational box, $[-b,b]^3$.
\end{theorem}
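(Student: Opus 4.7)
The plan is to reduce the bound on the Tucker $\varepsilon$-rank of $\mathbf{P}_l$ to a one-dimensional approximation problem for shifted wide Gaussians, exploiting the separable shift-and-windowing structure of $\mathcal{W}_\nu$. First I would expand
\[
\mathbf{P}_l = \sum_{\nu=1}^{N} z_\nu \sum_{k\in\mathcal{K}_l}
\bigl(\mathcal{W}_\nu^{(1)}\widetilde{\bf p}_k^{(1)}\bigr)\otimes
\bigl(\mathcal{W}_\nu^{(2)}\widetilde{\bf p}_k^{(2)}\bigr)\otimes
\bigl(\mathcal{W}_\nu^{(3)}\widetilde{\bf p}_k^{(3)}\bigr),
\]
which is a canonical tensor of nominal rank $N\cdot R_l$ with $R_l=O(\log^2\varepsilon)$. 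Since the Tucker $\varepsilon$-rank is controlled mode-wise by the $\varepsilon$-ranks of the three side matrices $A^{(\ell)}\in\mathbb{R}^{n\times NR_l}$ whose columns are the vectors $\mathcal{W}_\nu^{(\ell)}\widetilde{\bf p}_k^{(\ell)}$, $\nu\in\{1,\ldots,N\}$, $k\in\mathcal{K}_l$, it suffices to bound each $r_\ell:=\mbox{rank}_\varepsilon(A^{(\ell)})$, and then $\mathbf{r}_0\le 3\max_\ell r_\ell$.

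Next I would use the structure of the long-range quadrature points: by definition of $\mathcal{K}_l$, the one-dimensional continuous profiles $e^{-t_k^2 x_\ell^2}$ with $t_k\in(0,1]$ are uniformly smooth Gaussians of width $\ge 1$ on the interval $[-b,b]$. The columns of $A^{(\ell)}$ are discretized, shifted copies of these profiles centered at points $x_\nu^{(\ell)}\in[-b,b]$. The key step is to prove a Kolmogorov $n$-width estimate: the linear span of the family $\{e^{-t^2(x-y)^2}: t\in(0,1],\ y\in[-b,b]\}$ admits an $\varepsilon$-approximation (in the $L^2([-b,b])$ norm, and hence in the $\ell^2$ norm after discretization) by an $r^\ast$-dimensional subspace with
\[
r^\ast \le C\,b\,\log^{3/2}\bigl(|\log(\varepsilon/N)|\bigr).
\]
This estimate would be obtained by the same sinc-quadrature machinery that gave \eqref{eqn:sinc_conv}: one re-expands each wide shifted Gaussian on a small common Gaussian basis placed on an equispaced sub-lattice of $[-b,b]$ of $O(b)$ nodes, and invokes the exponential convergence of sinc/Gauss quadrature (Stenger, \cite{Stenger:93,HaKhtens:04I,Braess:BookApTh}) to pass from the target accuracy $\varepsilon/N$ (needed to control the aggregate over $N$ terms) to a number of terms scaling as $\log^{3/2}$ of $|\log(\varepsilon/N)|$.

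Once this one-dimensional $n$-width bound is in hand, the rank $r_\ell$ of the side matrix $A^{(\ell)}$ is controlled by $r^\ast$: indeed, the SVD of $A^{(\ell)}$ truncated below threshold $\varepsilon\|\mathbf{P}_l\|$ has at most $r^\ast$ retained singular vectors, because the columns of $A^{(\ell)}$ lie within $\varepsilon/N$ of an $r^\ast$-dimensional subspace and the error aggregates linearly in $N$ through the triangle inequality. Combining the three modes yields \eqref{eqn:Rank_LongR}, with the constant $C$ absorbing the mode multiplicity and the sinc-quadrature constants, and in particular independent of $N$ and of $b$ in any other way than the displayed linear factor.

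The main obstacle I expect is the precise justification of the Kolmogorov width estimate for the shifted-Gaussian family on $[-b,b]$: one must verify that the re-expansion of an off-center wide Gaussian $e^{-t^2(x-y)^2}$ on a fixed Gaussian dictionary incurs an error that depends only logarithmically on the required accuracy, and that the $N$-dependence enters solely through the inner logarithm. Everything else (separation of variables, RHOSVD-type truncation, propagation of the per-column error into a Tucker error bound) is a routine consequence of the canonical-to-Tucker machinery developed in \cite{khor-ml-2009,BKK_RS:17}.
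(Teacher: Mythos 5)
The paper itself does not reproduce the proof of this theorem: the statement is attributed to \cite{BKK_RS:17} and the proof resides there, not here, so a line-by-line comparison within this paper is impossible. Judged against the known structure of the cited argument, your reduction is the right one: expand $\mathbf{P}_l$ as a rank-$NR_l$ canonical tensor, observe that the Tucker $\varepsilon$-rank is controlled mode-wise by the $\varepsilon$-ranks of the three side matrices (this is exactly the RHOSVD / canonical-to-Tucker machinery of \cite{khor-ml-2009,BKK_RS:17}), and reduce to a one-dimensional approximation problem for the family of shifted wide Gaussians $\{e^{-t^2(x-y)^2}:\, t\in(0,1],\ y\in[-b,b]\}$ on $[-b,b]$. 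Two small slips worth flagging: the mode-wise bound should read $\mathbf{r}_0\le \max_\ell r_\ell$ rather than $3\max_\ell r_\ell$ (the Tucker rank tuple is bounded componentwise, there is no mode-multiplicity factor), and the error aggregation in the side matrix runs over $NR_l$ weighted columns, not $N$, which costs a further (logarithmic) correction that should be tracked.

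The genuine gap, which you yourself name, is the Kolmogorov-width lemma: that the shifted-wide-Gaussian family above is $\delta$-approximated, uniformly over shifts, by a subspace of dimension $O\bigl(b\,\log^{3/2}|\log\delta|\bigr)$. Your sketch indicates the idea (re-expand on a small common dictionary anchored on $O(b)$ nodes) but gives no construction that actually produces these exponents. In particular, a naive Taylor expansion of $e^{2t^2xy}$ on $[-b,b]^2$ needs $O(b^2)$ terms, so the asserted linear-in-$b$ scaling requires something sharper (a bandlimit/prolate-spheroidal count or a hierarchical re-expansion), and the exponent $3/2$ does not come directly from the sinc bound \eqref{eqn:sinc_conv}, which would give $\log^2$; its origin must be explained. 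Until both of these are pinned down, the bound \eqref{eqn:Rank_LongR} is asserted, not derived, and the sketch is a correct map of the territory rather than a proof.
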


The estimate (\ref{eqn:Rank_LongR}) indicates that for fixed size of computational box embedding 
the many-particle system the numerical cost for the canonical approximation of the long range part in 
the collective potential depends only logarithmically on the approximation accuracy 
and the number of particles.

\begin{corollary} \label{cor:Rank_longR_CP_TT}
 The simple consequence of Theorem \ref{thm:Rank_LongRange} is the following estimate 
 on the CP rank and the TT-rank of the long-range part in the collective electrostatic potential 
 ${\bf P}_l$,
 \[
  \mbox{rank}_{CP}({\bf P}_l)\leq {\bf r}_0^2, \quad \mbox{rank}_{TT}({\bf P}_l)\leq {\bf r}_0^2,
 \]
 where the Tucker rank ${\bf r}$ is bounded by (\ref{eqn:Rank_LongR}).
\end{corollary}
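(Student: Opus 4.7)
The plan is to reduce both inequalities to standard conversions between rank-structured formats applied to the Tucker decomposition of ${\bf P}_l$ furnished by Theorem \ref{thm:Rank_LongRange}. No new analysis of the sinc quadrature or of the collection of shifted kernels is required; the bound ${\bf r}_0$ from (\ref{eqn:Rank_LongR}) will be used as a black box controlling every Tucker mode-rank simultaneously, and the remainder is pure multilinear algebra in three dimensions.

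Concretely, I would fix a Tucker representation
\[
{\bf P}_l = \sum_{k_1=1}^{r_1}\sum_{k_2=1}^{r_2}\sum_{k_3=1}^{r_3}
\beta_{k_1 k_2 k_3}\,{\bf u}^{(1)}_{k_1} \otimes {\bf u}^{(2)}_{k_2} \otimes {\bf u}^{(3)}_{k_3},
\qquad r_\ell \le {\bf r}_0,
\]
and then collapse the innermost index by introducing $\widetilde{\bf u}^{(3)}_{k_1 k_2} := \sum_{k_3=1}^{r_3} \beta_{k_1 k_2 k_3}\,{\bf u}^{(3)}_{k_3}$. The reorganized identity
\[
{\bf P}_l = \sum_{k_1=1}^{r_1}\sum_{k_2=1}^{r_2} {\bf u}^{(1)}_{k_1} \otimes {\bf u}^{(2)}_{k_2} \otimes \widetilde{\bf u}^{(3)}_{k_1 k_2}
\]
exhibits ${\bf P}_l$ as a sum of at most $r_1 r_2 \le {\bf r}_0^2$ rank-one tensors, which proves the CP bound. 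Since the same collapsing step works after any permutation of the three modes, one in fact obtains $\mbox{rank}_{CP}({\bf P}_l) \le \min(r_1 r_2, r_1 r_3, r_2 r_3) \le {\bf r}_0^2$.

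For the TT bound I would then invoke the elementary observation that any CP representation with $R$ terms is automatically a TT representation whose every TT-rank is at most $R$: the two carriage matrices are obtained by grouping the $R$ rank-one summands along the two cuts of the tensor. Applied to the CP decomposition just constructed, this yields $\mbox{rank}_{TT}({\bf P}_l) \le {\bf r}_0^2$. Equivalently, one can argue directly that for a Tucker tensor the rank of each unfolding is bounded by the product of Tucker mode-ranks on either side of the split, giving the same conclusion.

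There is no genuine obstacle here: every step is either bookkeeping or a standard inequality between tensor formats. The only item that deserves a line of justification is the collapsing step in the CP reduction, which is valid because the tensor product is multilinear and therefore any inner summation over a single index can be absorbed into the corresponding mode factor without increasing the number of outer rank-one summands. All remaining content follows verbatim from Theorem \ref{thm:Rank_LongRange}, so the real analytic work has already been done upstream in the rank estimate (\ref{eqn:Rank_LongR}).
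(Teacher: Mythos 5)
Your proof is correct, and it supplies the argument the paper leaves unstated (the corollary is asserted as a ``simple consequence'' with no proof given). The CP bound via absorbing one mode factor into the core, giving $\mathrm{rank}_{CP}({\bf P}_l)\le\min(r_1 r_2, r_1 r_3, r_2 r_3)\le{\bf r}_0^2$, and the TT bound via the elementary CP-to-TT conversion, is exactly the standard bookkeeping one would expect the authors to have in mind. Two small remarks. First, your phrasing ``the rank of each unfolding is bounded by the product of Tucker mode-ranks on either side of the split'' should say \emph{the minimum of} the two products, since the unfolding factors through the corresponding unfolding of the $r_1\times r_2\times r_3$ core and a matrix rank is bounded by the smaller of its two dimensions; the conclusion you want still follows. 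Second, and more interestingly, for $d=3$ the correct minimum-bound actually gives the sharper estimate $\mathrm{rank}_{TT}({\bf P}_l)\le{\bf r}_0$ rather than ${\bf r}_0^2$: the first TT-rank is bounded by $\min(r_1, r_2 r_3)=r_1$ and the second by $\min(r_1 r_2, r_3)=r_3$, both at most ${\bf r}_0$. The paper's stated bound ${\bf r}_0^2$ is therefore not tight for the TT case, and your detour through CP reproduces the looser constant; this is not an error in your argument, merely a missed opportunity to improve on the published statement.
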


Numerical rank reduction in the sum (\ref{eqn:Long-Range_Sum}) with the initial canonical rank
$R_0=R N$ is performed by the  C2T
algorithm that is based on the SVD of the $n\times R_0$ side matrices in the canonical 
sum (\ref{eqn:Long-Range_Sum}).
In case of very large number of particles $N$ the C2T algorithm can be applied 
in the ``add-and-compress'' version. In this approach the initial group of $N$ particles is 
decomposed into certain number $m_0$ of small subgroups such that the long-range part is precomputed for  
 each subgroup and then rank compression applies to the sum of $m_0$ canonical tensors with 
 a small rank.  The optimal number $m_0$ can be easily estimated by solving simple optimization 
 problem for the cost functional.

 \section{RS tensor format in bio-molecular modeling} \label{sec:rs-format}
 
 \subsection{Interaction energy and forces via long-range part in the potential} 
 \label{ssec:rsPot_Energ_Forces}

 The RS tensor format introduced in \cite{BKK_RS:17} applies to many particle 
systems with rather generally located potentials all discretized on the fine tensor grid in $\mathbb{R}^3$. 
These can be the electrostatic potentials 
of large atomic systems like bio-molecules or the multidimensional scattered data modeled by
radial basis functions. The main advantage of the RS tensor format is that
the partition into the long- and short-range parts is performed just by sorting skeleton vectors
in the tensor representation of the generating kernel,
\[
  \mathbf{P}_R = \mathbf{P}_{R_s} + \mathbf{P}_{R_l}.
\]   
Theorem \ref{thm:Rank_LongRange} proves that the sum of long range contributions 
from all particles in the collective
potential is represented by a low-rank canonical/Tucker tensor 
with a rank which only logarithmically depends on the number of particles $N$. 
The representation complexity of the short range part is $O(N)$ with a small constant 
independent on the number of particles.

The basic tool for fast calculation of the long-range part is  the canonical-to-Tucker algorithm 
combined with the reduced higher order SVD (RHOSVD), see \cite{khor-ml-2009}. 
  The error of the RS tensor representation is defined by the
$\varepsilon$-truncation threshold for the Tucker-to-canonical algorithm in the rank reduction scheme.
Figure \ref{fig:long} illustrates the cross-sections of the collective 
free space electrostatic potential for $650$ atomic biomolecule \cite{BKKKM_PBE:18} 
computed by the RS tensor representation. 
\begin{figure}[tbh]
\centering  
\includegraphics[width=7cm]{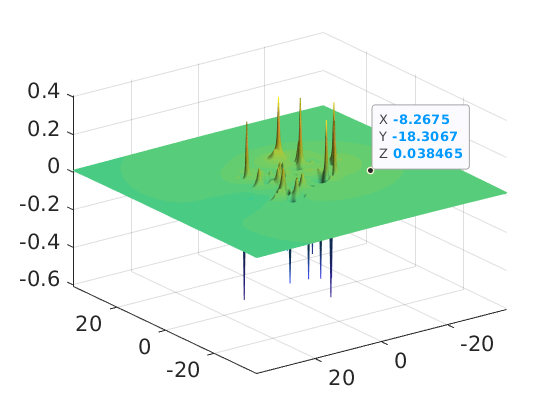}\quad
 \includegraphics[width=7cm]{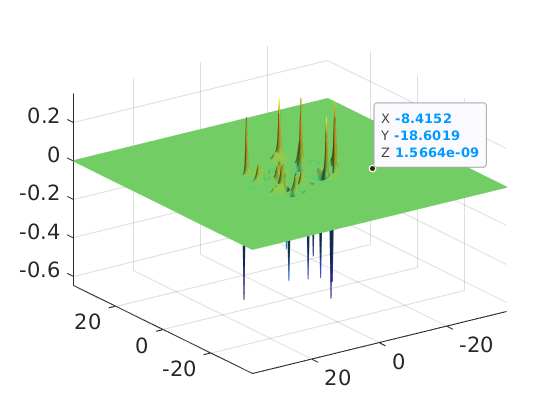}\\
 \includegraphics[width=7cm]{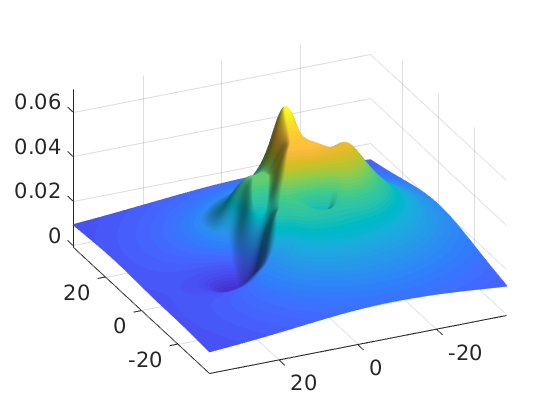}\quad
 \includegraphics[width=7cm]{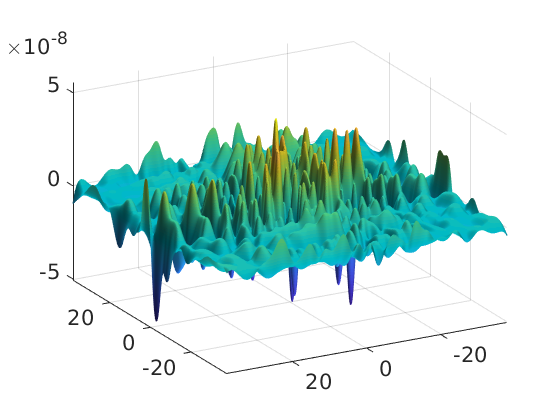}
 \caption{\small Top:
Left: the free space electrostatic potential of a small biomolecule computed by RS
tensor format. Right: the short-range part of the RS tensor.
Bottom:
The long-range part of the collective free space electrostatic 
potential of a small biomolecule (left) and the error of RS representation (right). } 
\label{fig:long}  
\end{figure}

The beneficial feature of the RS decomposition is due to the low-parametric grid-based 
 representation of the long-range component in the many-particle potential which allows 
 to recover the most important physical characteristic of the system 
 such that the interaction energy and  forces.  
 Recall that the electrostatic interaction energy is represented in the form
  \begin{equation}
 \label{inter_energy} 
 E_{N}= E_N(x_1,\dots,x_N)= \sum^{N}_{i=1} \sum^{N}_{j <i} \frac{z_i z_j}{\|x_i- x_j\|},  
 \end{equation}
 and it can be computed by direct summation in $O(N^2)$ operations.
 Define vectors ${\bf z}=(z_1,\ldots,z_N)^T\in \mathbb{R}^N$ and 
 ${\bf p}_l=({\bf P}_l({x}_{1}), \ldots,{\bf P}_l({x}_{N})0^T\in \mathbb{R}^N $.
 Given the long-range tensor representation ${\bf P}_l$,
 the following  lemma represents the electrostatic interaction energy by simple operation with
 the vectors ${\bf z}$  and ${\bf p}_l$ which can be precomputed at $O(N)$ cost up to lower order terms. 
  The following statement is the minor modification of Lemma 4.2  in \cite{BKK_RS:17}. 

 \begin{lemma}\label{lem:InterEnergy} 
 Let the effective support of the short-range components in the reference
 potential ${\bf P}_R$ do not exceed $\sigma>0$.
 Then the interaction energy $E_N$ of the $N$-particle system  can be
 calculated by using only the long range part  in the total potential sum
 \begin{equation}\label{eqn:EnergyFree_Tensor}
 E_N = \frac{1}{2} 
 \sum\limits_{{j}=1}^N z_{j}({\bf P}_l({x}_{j}) - z_j {\bf P}_{R_l}(0))=
 \frac{1}{2}\langle {\bf z},{\bf p}_l \rangle - \frac{{\bf P}_{R_l}(0)}{2} \sum\limits_{{j}=1}^N z_j^2,
\end{equation}
in $O(d R_l N)$ operations, where $R_l$ is the canonical rank of the long-range component.
\end{lemma}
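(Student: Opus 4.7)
The plan is to rewrite the pair-sum defining $E_N$ in terms of the long-range reference kernel $P_{R_l}$, and then identify the resulting expression with values of the globally assembled long-range potential ${\bf P}_l$ at the particle centers. First I would recall that, by the splitting $\mathbf{P}_R=\mathbf{P}_{R_s}+\mathbf{P}_{R_l}$, the continuous generating kernel decomposes as $P_R(y)=P_{R_s}(y)+P_{R_l}(y)$ for $y\in\mathbb{R}^3$, where $P_{R_s}$ is supported (up to a prescribed tolerance) in the ball $B_\sigma$. The global long-range potential evaluated at the particle site $x_j$ therefore reads
\begin{equation*}
{\bf P}_l(x_j)=\sum_{i=1}^N z_i\, P_{R_l}(x_j-x_i)=z_j\,P_{R_l}(0)+\sum_{i\neq j} z_i\, P_{R_l}(x_j-x_i),
\end{equation*}
where the first term isolates the self-contribution at $y=0$.

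The key step is to use the support hypothesis on $P_{R_s}$. If the particle configuration satisfies $\|x_i-x_j\|>\sigma$ for all $i\neq j$ (which is the standing physical assumption behind the choice of $\sigma$), then $P_{R_s}(x_j-x_i)=0$ for $i\neq j$, so that $P_{R_l}(x_j-x_i)=P_R(x_j-x_i)=1/\|x_j-x_i\|$. Consequently
\begin{equation*}
\sum_{j=1}^N z_j\bigl({\bf P}_l(x_j)-z_j\,P_{R_l}(0)\bigr)=\sum_{j=1}^N\sum_{i\neq j}\frac{z_i z_j}{\|x_i-x_j\|}=2E_N,
\end{equation*}
by the symmetry of the double sum. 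Dividing by two yields the first equality of \eqref{eqn:EnergyFree_Tensor}, and rewriting it as an inner product $\langle{\bf z},{\bf p}_l\rangle$ minus the diagonal correction $P_{R_l}(0)\sum_j z_j^2$ delivers the second.

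For the complexity claim, I would appeal to the RS tensor representation of ${\bf P}_l$: by Theorem \ref{thm:Rank_LongRange} and the construction in \eqref{eqn:Long-Range_Sum}, the long-range component is stored as a canonical tensor of rank $R_l$ in $d$ directions, so evaluating ${\bf P}_l$ at a single grid point costs $O(d R_l)$ operations (one scalar product per mode followed by $R_l$ summations); assembling the vector ${\bf p}_l=({\bf P}_l(x_1),\ldots,{\bf P}_l(x_N))^\top$ then costs $O(d R_l N)$. The subsequent inner product $\langle{\bf z},{\bf p}_l\rangle$ and the diagonal sum $\sum_j z_j^2$ both add only $O(N)$, and $P_{R_l}(0)$ is a precomputed scalar.

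The only delicate point I anticipate is the rigorous handling of the cutoff: strictly speaking $P_{R_s}$ is not compactly supported but only $\delta$-small outside $B_\sigma$, and $P_{R_l}$ approximates $1/\|y\|$ to accuracy $\varepsilon$ on the far field. Hence the equality $\sum_j z_j({\bf P}_l(x_j)-z_j P_{R_l}(0))=2E_N$ holds only up to an error controlled by $\delta$, $\varepsilon$, and $\sum_{i,j}|z_iz_j|$. This perturbation is benign and is absorbed into the standing RS approximation tolerance, but it should be stated explicitly rather than glossed over, and that is the main obstacle to making the proof fully rigorous rather than formal.
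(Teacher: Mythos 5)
The paper itself does not present a proof of this lemma; it only remarks that it is a minor modification of Lemma~4.2 in \cite{BKK_RS:17}. Your reconstruction is mathematically correct and captures the essential mechanism behind that result: the short-range reference tensor $\mathbf{P}_{R_s}$ contributes (up to the RS tolerance) only to the self-interaction at $y=0$, so once the particle separation exceeds $\sigma$, every cross term $P_{R_s}(x_j - x_i)$ for $i\neq j$ drops out and the double sum over $i\neq j$ collapses onto the long-range kernel alone. Isolating the $i=j$ self-term $z_j P_{R_l}(0)$ in ${\bf P}_l(x_j)$, multiplying by $z_j$, summing, and using the symmetry of the pair sum then gives $2E_N$. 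This is exactly the calibration encoded in \eqref{eqn:EnergyFree_Tensor}, and your complexity estimate is also right: evaluating the rank-$R_l$ canonical tensor at a single lattice index costs $O(d R_l)$, so the vector ${\bf p}_l$ is assembled in $O(d R_l N)$, and the two remaining contractions add only $O(N)$.

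One point worth sharpening rather than leaving as a caveat: the statement $P_{R_s}(x_j - x_i)=0$ for $i\neq j$ requires both (i) that the effective support of $\mathbf{P}_{R_s}$ is $\leq\sigma$, which is the stated hypothesis, and (ii) that the particles are $\sigma$-separated, which you correctly identify as the implicit physical assumption but which is not part of the lemma's displayed hypotheses. You are right that the error from the non-strict cutoff (the $\delta$-tail of $P_{R_s}$ outside $B_\sigma$ and the quadrature error of the $\operatorname{sinc}$ approximation) is absorbed into the RS tolerance $\varepsilon$; the equality in \eqref{eqn:EnergyFree_Tensor} should be read as holding up to that controllable perturbation, which is consistent with how the lemma is used in the paper.
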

Here the term $\frac{1}{2}\langle {\bf z},{\bf p}_l \rangle$ denotes the "non-calibrated" 
interaction energy with the long-range tensor component ${\bf P}_l$.
Lemma \ref{lem:InterEnergy} indicates that the interaction energy does not depend on the short-range 
part in the collective potential which is the key point for the construction of
energy preserving regularized numerical schemes for solving the basic equations in bio-molecular 
modeling.

Notice that the energy expansion (\ref{eqn:EnergyLattice}) is the particular version of 
(\ref{eqn:EnergyFree_Tensor}) in the case of lattice structured system with equal charges.

The other important application of the RS format is due to the opportunity to recompute gradients and
the force field at each particle location of interest in applications to multi-particle dynamics.
In particular, such computational tasks arise in the problem of protein-ligand docking, 
in the process of electrostatic self-assembly, and for the general use in many-particle classical dynamics.

%\subsection{Gradients and forces}\label{ssec:Forces_Applic}

 Calculation of electrostatic forces and gradients of the interaction potentials 
 in multiparticle systems is a computationally extensive problem.
The algorithms based on Ewald summation technique were discussed in \cite{DesHolmII:98,HoEa:88}.
We propose an alternative approach using the RS tensor format. 

First, we consider the gradients operator.
Given an RS-canonical tensor ${\bf A}$ as in (\ref{eqn:RS_Can}) with the width parameter $\gamma >0$, 
the discrete gradient $\nabla_h=(\nabla_1,\ldots,\nabla_d)^T$ applied to the long-range
part in ${\bf A}={\bf A}_s + {\bf A}_l$ at all grid points of $\Omega_h$ simultaneously,  
can be calculated as the $R$-term canonical tensor by applying the simple
one-dimensional finite-difference (FD) operations to each rank-$1$ term in the long-range 
part  ${\bf A}_l$,
\begin{equation}\label{eqn:Gradients_Tens}
\nabla_h {\bf A}_l= {\sum}_{k =1}^{R} \xi_k ({\bf G}_k^{(1)},\ldots,{\bf G}_k^{(d)})^T,
%{\sum}_{k =1}^{R} \xi_k \nabla_1{\bf u}_k^{(1)}  \otimes \cdots \otimes {\bf u}_k^{(d)},
\end{equation}
with rank-$1$ tensor entries
$
 {\bf G}_k^{(\ell)} ={\bf a}_k^{(1)}  \otimes \cdots\otimes 
 \nabla_\ell {\bf a}_k^{(\ell)} \otimes \cdots\otimes{\bf a}_k^{(d)},
$
where $\nabla_\ell$ ($\ell=1,\dots,d$) is the univariate FD differentiation scheme in variable 
$x_\ell$ (by using backward or central differences). Numerical complexity of
the representation (\ref{eqn:Gradients_Tens}) can be estimated by $O(d R n )$ provided that
the canonical rank is almost uniformly bounded in the number of particles. 
The gradient operator applies locally to each short-range term in (\ref{eqn:RS_Can})
which amounts in the complexity $O(d R_0 N)$.

% For example, the gradient operation can be applied to an RS-canonical tensor for 
% evaluation of the field in the form
% \[
%  {\bf F}(x) =-\nabla P(x)=  \sum\limits_{{k}=1, {k}\neq {j}}^N  z_{k} 
%  \frac{{x} - {x}_{k} }{\|{x} - {x}_{k}\|^3}.
% \]

Given the electrostatic potential energy $E_N$,
the force vector ${\cal F}_j$ on the particle $j$ is obtained by differentiating the 
functional  $E_N(x_1,\dots,x_N)$ with respect to $x_j$,
\[
 {\cal F}_j=-\frac{\partial}{\partial x_j} E_N = - \nabla_{| x_j} E_N,
\]
which can be calculated explicitly (see \cite{HoEa:88}) in the form,
\begin{equation} \label{eqn:Force_j_direct}
 {\cal F}_j = \frac{1}{2} z_{j} \sum\limits_{{k}=1, {k}\neq {j}}^N  z_{k} 
 \frac{{x}_{j} - {x}_{k} }{\|{x}_{j} - {x}_{k}\|^3}.
\end{equation}
The Ewald summation technique for force calculations on the positions of particles
was discussed for example  in \cite{DesHolmII:98,HoEa:88}.

As an alternative to the direct summation by (\ref{eqn:Force_j_direct}) we discuss two diffent 
approaches for fast evaluation of (\ref{eqn:Force_j_direct}) based on the 
the RS tensor representation.

(A) The RS tensor approximation can be applied directly to the discretized 
electric field on the potential $P_0$, that is a sum of dipole-like terms,
\[
 {\cal E}(x) =-\nabla P_0(x)=  \sum\limits_{{k}=1}^N  z_{k} 
 \frac{{x} - {x}_{k} }{\|{x} - {x}_{k}\|^3}.
 \]
 This approach will be considered elsewhere.
 
(B) Calculation the force field components ${\cal F}_j$ by the direct FD numerical differentiation of 
the energy functional $E_N$ by using RS tensor representation 
of the long-range part in the $N$-particle interaction potential $P_0(x)$ discretized on fine spacial grid
(see \cite{BKK_RS:17}). 

In approach (B),
the differentiation in RS-tensor format with respect to $x_j$
is based on the explicit representation (\ref{eqn:EnergyFree_Tensor})
which requires only the calculation of long-range tensor ${\bf P}_l$.
% which can be rewritten in the form
%  \begin{equation}\label{eqn:EnergyFree_TensForce}
%  E_N(x_1,\dots,x_N)= \widehat{E}_N(x_1,\dots,x_N)
%  -  \frac{1}{2} (\sum\limits_{{j}=1}^N z_{j}^2) {\bf P}_{R_l}(x=0),
% \end{equation}
% where $\widehat{E}_N(x_1,\dots,x_N)= \frac{1}{2} \sum\limits_{{j}=1}^N z_{j}{\bf P}_l({x}_{j}) $
% denotes the "non-calibrated" interaction energy with the long-range tensor component ${\bf P}_l$.
 We chose $j=N$ for example, and apply the FD differentiation to the energy 
representation (\ref{eqn:EnergyFree_Tensor}) to obtain
\[
 E_N(x_1,\dots,x_N) - E_N(x_1,\dots,x_N-h{\bf e}_i)=  
 \frac{1}{2}\langle {\bf z},{\bf p}_l(x_1,\dots,x_N) - {\bf p}_l(x_1,\dots,x_N-h{\bf e}_i)\rangle, 
 % \widehat{E}_N(x_1,\dots,x_N) - \widehat{E}_N(x_1,\dots,x_N-h{\bf e}_i).
\]
for three different values of the axis vectors
${\bf e}_1=(1,0,0)^T$, ${\bf e}_2=(0,1,0)^T$ and ${\bf e}_3=(0,0,1)^T$.
The implementation needs a simple recalculation of the smooth potential ${\bf P}_l$ under small variation 
of the only one particle center $x_N$ leading to the cost $O(d R n )$, uniformly in $N$. 

The energy and force field calculus discussed above are related to the free space electrostatic
potential. The case of bio-molecular simulations, the protein like structures are modeled by
solute-solvent systems in polarizable media, where substructures with different permeability 
constants are included.
The electrostatics in such systems is most commonly described by the linear 
(or nonlinear) Poisson-Boltzmann equation to be addressed in what follows.

 \subsection{The linearized Poisson-Boltzmann equation}
 \label{ssec:PBE_definition}

 The Poisson-Boltzmann equation (PBE) describes the electrostatic potential of  
 an ensemble of charged particles 
 immersed in a continuum dielectric medium with fixed permittivity $\epsilon_s$ 
 provided that each charge is embedded 
 into a small sphere with different from $\epsilon_s$ dielectric constant.
 The PBE can be formulated either as the elliptic partial differential 
 equation in $\mathbb{R}^3$ \cite{Holst:94,CaMaSt:13},  or  as the boundary integral 
 equation over the solute-solvent  interface \cite{CaMeTo:97,Maday:2018,XieYing:JCAM-2016} 
 (in the case of linear model in $\mathbb{R}^3$). The PBE model is supposed to capture 
 the polarization effects in many-body systems.
 
 The basic techniques on the analysis and  numerical 
 approximation of the boundary and volume integral equations have been addressed in 
 \cite{MazSch:book:07,HsiWendBOOK:08,SauterSchwa_book:11}. The commonly used techniques 
 for solving the arising 
 discretized elliptic equations in $\mathbb{R}^3$ are based on either multigrid 
 iteration \cite{Holst:94}  or domain decomposition methods \cite{LiStCaMaMe:13,CaMaSt:13}. 
 
  The linearized PBE in the differential form reads 
  as the elliptic boundary value problem   % [Holst, McCammon '08]     
  \begin{equation}\label{eqn:PBE}
 -\nabla\cdot(\epsilon \nabla u)+ \kappa^2 u=\rho_f:= 
 {\sum}_{{k}=1}^N z_k \delta({x} - {x}_{k})
 \quad \mbox{in } \quad \Omega=\Omega_s \cup \Omega_m, 
\end{equation}
where $u$ is the target electrostatic potential of a protein,  $\rho_f$
%$\rho_f= \sum\limits_{{k}=1}^N z_k \delta(\|{x} - {x}_{k}\|)$ 
is the scaled singular charge distribution (here $\delta(x)$ is the Dirac delta) supported at points $x_k \in \Omega_m$ in the solute 
region, where $\epsilon=\epsilon_m$ and $\kappa =0$, while in the external solvent domain $\Omega_s$ we have $\kappa \geq 0,\epsilon=\epsilon_s$. 
%, where $\delta$ is the Dirac delta. 
The interface conditions arise from the continuity of the potential and flaxes
on $\Gamma=\partial \Omega_m$: % arise from the dielectric theory:
$[u]_{\Gamma}=0, \; \left[ \epsilon\frac{\partial u}{\partial n}  \right]_\Gamma=0\; \mbox{on}\; \Gamma.$
Moreover, the trace of the solution on $\partial \Omega$ or some other quantities may be fixed.%, see Figure \ref{fig:PBE_geometry}.
  \begin{figure}
 \begin{center}
\includegraphics[width=6cm]{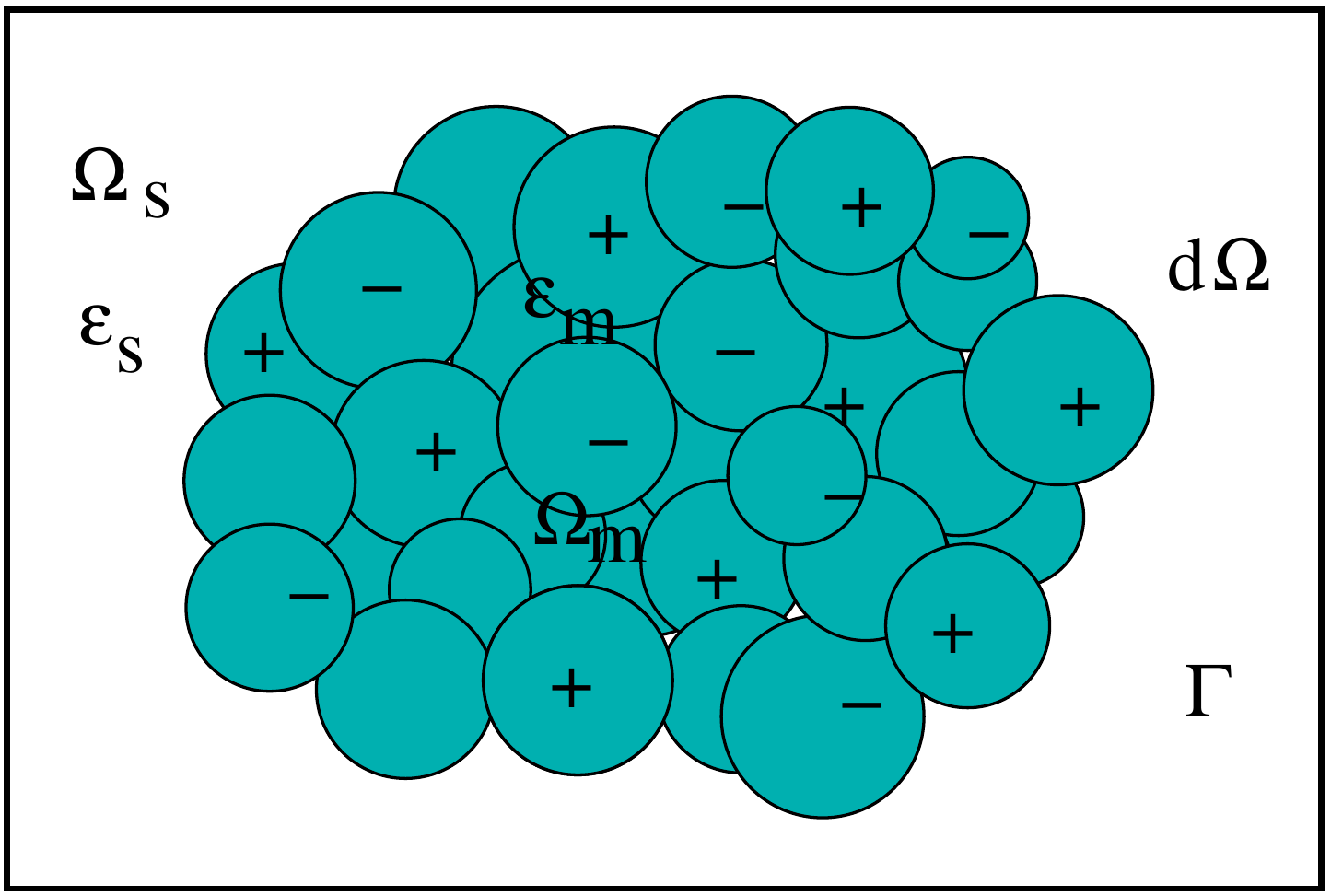} 
 \end{center}
 \caption{\small Computational domain for the PBE.}
 \label{fig:PBE_geometry}
  \end{figure}

The traditional FEM discretization methods could not be applied directly to the equation 
(\ref{eqn:PBE}) because the lack of regularity in the solution $u$. The traditional approaches 
are merely based on the regularization of this equation by subtraction of the exact free space 
electrostatic potential that solves the linear equation $-\Delta u_0 = \rho_f$ in $\mathbb{R}^3$.
Such approaches require the essential modification of the interface and boundary conditions,
as well as the non-trivial (especially in the nonlinear case) 
recovery procedure of the total interaction energy since the component $u_0$ may include 
considerable portion of energy.

In what follows we discuss the recently introduced new  {\it energy preserving}
regularization scheme based on (A) the RS splitting of the discretized Dirac 
delta \cite{BKhor_Dirac:18}, and (B) using the RS tensor format \cite{BKK_RS:17} 
for the construction of the regularized solution \cite{BKhor_Dirac:18,BeKhKhKwSt:18,BKKKM_PBE:18}.

 \subsection{The RS tensor decomposition of the discretized Dirac delta}
 \label{ssec:RS_delta}

 The RS tensor splitting of the discretized Dirac delta introduced in \cite{BKhor_Dirac:18}
 is based on the idea that discretized Dirac delta can be recovered from 
 the governing equation  
 \begin{equation}\label{eqn:DiracD_Newton}
 -\Delta \frac{1}{\|x\|} =4 \pi \delta(x), \quad x\in \mathbb{R}^3,   %\delta(x),\quad \frac{1}{\|x\|} =- 4 \pi \Delta^{-1} \delta(x)
\end{equation}
discretized on the fine tensor grid in the computational box in $\mathbb{R}^3$. 
Indeed, we can discretize all entities on the left hand side of the equation (\ref{eqn:DiracD_Newton})
via grid-based approximations which admit the low-rank representations as follows
\[
 \frac{1}{\|x\|}   \rightsquigarrow \mathbf{P}_R = \mathbf{P}_{R_s} + \mathbf{P}_{R_l} 
 \in \mathbb{R}^{n \times n \times n}, 
 \quad \mbox{and} \quad \Delta \rightsquigarrow {\Delta}_h,
\]
where ${\Delta}_h$ is the finite difference (FD) Laplacian over $n \times n \times n$ grid with 
mesh size $h$. In particular, we use the Kronecker rank-$3$ representation
\begin{equation}\label{eqn:Lapl_Kron3}
{\Delta}_h = \Delta_{1} \otimes I_2\otimes I_3 + I_1 \otimes \Delta_{2} \otimes I_3 + 
I_1 \otimes I_2\otimes \Delta_{3}. %,\quad \mbox{K-rank}(A_{\Delta})=3.
\end{equation}
This introduces the grid representation of the Dirac delta
\begin{equation} \label{eqn:Delta_discrete}
 \delta(x) \rightsquigarrow \boldsymbol{\delta}_h:=- \frac{1}{4 \pi}{\Delta}_h \mathbf{P}_R,
\end{equation}
which is associated with its particular differential representation (\ref{eqn:DiracD_Newton}).
Now the rank-$3R$ CP tensor representation $\boldsymbol{\delta}_h$ of the discretized 
Dirac delta ${\delta}$ approximated on 
$n\times n \times n$ Cartesian grid can be computed as the action of the 
Laplace operator on the Newton kernel given in the canonical rank-$R$ tensor format as follows
\[
 \boldsymbol{\delta}_h = -A_{\Delta} \mathbf{P}_R=
 \sum\limits_{k=1}^R ( \Delta_1 {\bf p}^{(1)}_k \otimes {\bf p}^{(2)}_k \otimes {\bf p}^{(3)}_k 
 +  {\bf p}^{(1)}_k \otimes \Delta_2 {\bf p}^{(2)}_k \otimes {\bf p}^{(3)}_k 
 +  {\bf p}^{(1)}_k \otimes  {\bf p}^{(2)}_k \otimes \Delta_3 {\bf p}^{(3)}_k). 
\]
The short- and long-range splitting of the 
 discretized $\boldsymbol{\delta}_h$ is defined by
 \[
  \boldsymbol{\delta}_h= \boldsymbol{\delta}_s + \boldsymbol{\delta}_l,
  \quad \mbox{where} \quad 
 \boldsymbol{\delta}_s:= - {\Delta}_h \mathbf{P}_{R_s}, \quad 
 \mbox{and} \quad \boldsymbol{\delta}_l:= - {\Delta}_h \mathbf{P}_{R_l}.
 \]
 
In the PBE setting,  the construction of short-range part $\mathbf{P}_{R_s}$ implies 
that $\boldsymbol{\delta}_s$ vanishes on the interface $\Gamma$, and hence the potential 
$\mathbf{P}_{R_s}$ satisfies the discrete Poisson equation in $\Omega_m$ with the right-hand
side in the form $\boldsymbol{\delta}_s$ and zero boundary conditions on $\Gamma$.
 
The above RS decomposition of the Dirac delta can be extended to the case of many-atomic
systems as in the case of PBE (\ref{eqn:PBE}), see \cite{BKhor_Dirac:18}.
 Lemma 3.1 in \cite{BKhor_Dirac:18} proves that the Tucker rank of the long-range part in the 
 $N$-particle discretized Dirac delta depends only logarithmically on the quantity 
 $\varepsilon/N$, e.g., 
 \[
 {\mbox{rank}}_{Tuck}({\delta}_{s,N})= O(\log^{3/2}(\varepsilon/N)).
 \]

Now we are in a position to describe the energy preserving tensor regularization scheme for PBE.

 \subsection{Energy preserving tensor regularization scheme for PBE}
 \label{ssec:RS_reg_PBE}
 
To avoid some technical details we describe  the energy preserving tensor 
regularization scheme for PBE (\ref{eqn:PBE}) on the continuous level, see \cite{BKhor_Dirac:18}.

Let $u_f=u_{\mbox{\footnotesize short}} + u_{\mbox{\footnotesize long}}$ 
be the RS splitting of the free space potential generated 
by the density $\frac{1}{\epsilon_m} f_\rho$, i.e., satisfying the equation 
\begin{equation}\label{eqn:freeSpaceRS}
-\epsilon_m \Delta u_f = f_\rho \quad   \mbox{in} \quad  \mathbb{R}^3.
\end{equation}
This introduces the 
corresponding RS decomposition of the density $f_\rho$ in (\ref{eqn:PBE}) 
 due to the equation
\begin{equation}\label{eqn:RS_decomp_Rho} %{eqn:Short_Dir_Newt_eqn}
 -\epsilon_m \Delta (u_{\mbox{\footnotesize short}} + u_{\mbox{\footnotesize long}}) 
 = f_\rho =  \rho_{\mbox{\footnotesize short}} + \rho_{\mbox{\footnotesize long}},
\end{equation}
where
\begin{equation}\label{eqn:Short_Dir_Newt_eqn}
 \rho_{\mbox{\footnotesize short}} = -\epsilon_m \Delta u_{\mbox{\footnotesize short}}, \quad
 \mbox{and} \quad 
 \rho_{\mbox{\footnotesize long}}=-\epsilon_m \Delta u_{\mbox{\footnotesize long}}.
\end{equation}
 The above RS decomposition suggests  the new regularization scheme 
by the RS splitting of the solution $u$  of (\ref{eqn:PBE})  in the form
\begin{equation}\label{eqn:PBE_regul_solution}
 u= \overline{u}_{\mbox{\footnotesize long}} + u_{\mbox{\footnotesize short}},
\end{equation}
where the component ${u}_{\mbox{\footnotesize short}}$ is precomputed explicitly 
and stored in the canonical/Tucker tensor format living on the fine Cartesian grid,  
and the unknown function $\overline{u}_{\mbox{\footnotesize long}}$ satisfies the equation 
with the modified right-hand side 
\begin{equation}\label{eqn:PBE_regul}
 -\nabla\cdot \epsilon \nabla \overline{u}_{\mbox{\footnotesize long}} + 
 \kappa^2 \overline{u}_{\mbox{\footnotesize long}}=\rho_{\mbox{\footnotesize long}} 
 \quad \mbox{in } \quad \Omega,
\end{equation}
and equipped with the same interface and boundary conditions as the initial PBE.
Indeed, since $u_{\mbox{\footnotesize short}}$ is localized within each sphere $S_k$ 
of a small radius $\sigma$ centers at $x_k$ (i.e., both $u_{\mbox{\footnotesize short}}$ 
and its co-normal derivative vanish on $\Gamma$), we deduce that 
$\overline{u}_{\mbox{\footnotesize long}} = u - u_{\mbox{\footnotesize short}}$ inherits the same
interface conditions on $\Gamma$ as the solution $u$ of PBE. 
 
 The discrete version of the above regularization scheme is discussed in 
  \cite{BKhor_Dirac:18}, \S3.4,  while the numerical tests for some moderate 
  size proteins have been conducted in \cite{BeKhKhKwSt:18}. 
  The extension of this regularization scheme to the case of nonlinear PBE 
  is described in \cite{BKKKM_PBE:18}.
  
  From Lemma \ref{lem:InterEnergy} we know that the short range part in the tensor representation 
  of collective potential does not contribute into the free-space electrostatic energy.
    Combining this with the fact that our regularization scheme effects only the data 
  in the interior of   $\Omega_m$, where the equation coefficient is constant, 
  it can be shown that the energy 
  of the regularized solution is the same as that for the solution of the initial PBE. 
  This explains why we call the  
  presented regularization scheme as {\it energy preserving}, i.e., {\it conservative}.

 \section{Conclusions}\label{sec:Conclusions}
 
 In this paper,  we outline the prospects for tensor-based numerical modeling of the
collective electrostatic  potentials on lattices and in many-particle 
systems of general type. 
We introduce the low-rank tensor approximation method for calculation of interaction potential
for  many-body  systems with variable charges 
 placed on $L^{\otimes d}$ lattices and discretized on fine $n^{\otimes d}$ Cartesian grids.
As result, the interaction potential is represented 
in a parametric low-rank canonical format in $O(d L n)$ complexity, while 
the energy is then calculated in $O(d L)$ operations.

Electrostatics in large biomolecules can be modeled by using the novel 
range-separated tensor format, %\cite{BKK_RS:17}, 
which maintains the long-range part of  the free space  3D collective 
potential of many-particle system by using a parametric low-rank format in $O(n)$-complexity. 
 We demonstrate the RS decomposition for the Slater function and present the 
comparative analysis with the case of Newton kernel.
We show how the force field can be easily recovered by using the already precomputed electric field 
in the low-rank RS format.
We demonstrate that the RS tensor representation of the discretized Dirac delta %\cite{BKhor_Dirac:18} 
enables the construction of the efficient energy preserving regularization scheme
for solving the 3D elliptic PDEs with strongly singular right-hand side
arising, in particular, in  bio-molecular  modeling.

% We present some numerical illustrations which help to better understand the presented 
% approach. 

 The presented techniques  demonstrate that 
the tensor-based approximation methods suggest the  powerful numerical tools 
 which can be applied for many-body dynamics, protein docking and classification problems, 
for low-parametric interpolation of big data, as well as in machine learning.

 \section{Appendix: Sketch of the rank-structured tensor formats}\label{sec:Append}
 
The tensor decompositions have been since long used in chemometrics, 
psychometrics, and signal processing for the quantitative analysis of the experimental data,
without special demands on accuracy and data size 
\cite{De_Lath_PhD:97,Comon:02,Cichocki:2002,smilde-book-2004}.
The basic rank-structured  representations  of tensors in multilinear algebra 
are the canonical \cite{Hitch:27} and Tucker \cite{Tuck:1966} tensor formats. 

We consider tensors of order $d$ as the multi-indexed data array  
\begin{equation}
 \label{Tensor_def}
{\bf T}=[t_{i_1,\ldots,i_d}]  \in 
\mathbb{R}^{n_1 \times \ldots \times n_d}\;
\mbox{  with } \quad i_\ell\in {\cal I}_\ell:=\{1,\ldots,n_\ell\}.
\end{equation}
Multilinear operations with such tensors scale exponentially in the dimension parameter, 
as $O(n^d)$ (assuming $n_\ell =n$).

  A tensor in the $R$-term canonical format is defined by a sum of rank-$1$ tensors  
 \begin{equation}\label{eqn:CP_form}
   {\bf T} = {\sum}_{k =1}^{R} \xi_k
   {\bf u}_k^{(1)}  \otimes \ldots \otimes {\bf u}_k^{(d)},  \quad  \xi_k \in \mathbb{R},
\end{equation}
where ${\bf u}_k^{(\ell)}\in \mathbb{R}^{n_\ell}$ are normalized vectors, 
and $R$ is the canonical rank. The storage cost of this
parametrization is bounded by  $d R n$, which is linear in  both  $n$ and $d$. 
Thus, the canonical tensor format
avoids the curse of dimensionality when increasing the dimension parameter $d$.
Though,  there are no stable algorithms for the canonical-type 
approximate representation of full format tensors. 

The Tucker tensor format is suitable for stable numerical decomposition of tensors with a fixed
truncation threshold.
We say that the tensor ${\bf T} $ is represented in the rank-$\bf r$ orthogonal Tucker format 
with the rank parameter ${\bf r}=(r_1,\ldots,r_d)$ if 
\begin{equation}
\label{eqn:Tucker_form}
  {\bf T}  =\sum\limits_{\nu_1 =1}^{r_1}\ldots
\sum\limits^{r_d}_{{\nu_d}=1}  \beta_{\nu_1, \ldots ,\nu_d}
\,  {\bf v}^{(1)}_{\nu_1} \otimes  
{\bf v}^{(2)}_{\nu_2} \ldots \otimes {\bf v}^{(d)}_{\nu_d}, 
\end{equation}
where $\{{\bf v}^{(\ell)}_{\nu_\ell}\}_{\nu_\ell=1}^{r_\ell}\in \mathbb{R}^{n_\ell}$, $\ell=1,\ldots,d$
represents a set of orthonormal vectors 
and $\boldsymbol{\beta}=[\beta_{\nu_1,\ldots,\nu_d}] \in \mathbb{R}^{r_1\times \cdots \times r_d}$ is 
the Tucker core tensor. The exponential growth of the storage size is not avoided for the Tucker type
decomposition, but is reduced to the core size, $r^d$ ($r=\max r_\ell$),
where in the usual practice $r\ll n$.  Hence, this format is suited for applications in moderate 
dimensions, say, for problems in $\mathbb{R}^3$.

The standard Tucker tensor approximation method is based on 
the higher order singular value decomposition (HOSVD) \cite{DMV-SIAM2:00,De_Lath_PhD:97}
of the complexity $O(n^{d+1})$.
This techniques require the  target  tensor in full size format,
hence, it becomes non-tractable in numerical analysis of tensors 
with large mode size  or/and large $d$  due to storage limitations. 
The HOSVD was further extended to the TT/HT tensor formats 
\cite{OsTy_TT:09,Gras:2010,Osel_TT:11}. 

% However, the complexity of the  Tucker tensor decomposition method 
% \cite{DMV-SIAM2:00,De_Lath_PhD:97} is estimated by $O(n^{d+1})$ and it requires the  target 
% tensor in full size format. 
% This  process  is called the higher order singular value decomposition (HOSVD). 
% The HOSVD based Tucker
% tensor decomposition algorithms are non-tractable in numerical analysis of tensors 
% with large mode size due to storage limitations.

  Tensor numerical methods  in scientific computing  emerged when 
it was found that the  Tucker/canonical  tensor decompositions for 
function related tensors exhibit exceptional approximation properties. 
It was proven and demonstrated numerically  \cite{Khor1:06,KhKh:06} that 
for tensors arising from the discretization of some classes of multivariate operators and 
functions in $\mathbb{R}^d$ the approximation error of the Tucker decomposition 
decays exponentially in the Tucker rank. 
Previous papers on the low-rank approximation of the multidimensional 
functions and operators, in particular, based on sinc-quadratures 
\cite{GHK6:03,GHK:05,HaKhtens:04I}, described the constructive way
for the analytical low-rank canonical representations. 

The application of tensor numerical methods in computational quantum chemistry was an important 
motivating step \cite{khor-ml-2009,KhKhFl_Hart:09}.
It was shown that calculation of the three-dimensional convolution operators in the 
Hartree-Fock equation 
can be reduced to operations which scale linearly in the univariate grid size $n$.
 In fact, both the direct and the assembled tensor summation methods for electrostatic 
potentials (see section \ref{ssec:direct} and \cite{KhKh_CPC:13}) were
initiated by application of the tensor-based Hartree-Fock solver \cite{Khor_bookQC_2018} to 
 compact molecules and lattice-structured molecular systems, respectively. 
    
% In fact, the tensor summation method for the electrostatic potentials 
% was first initiated by application
% of the tensor-based Hartree-Fock solver to lattice-type molecular systems.
   
An important ingredient  of the tensor numerical methods was the canonical-to-Tucker 
decomposition for tensors of type (\ref{eqn:CP_form}) with large initial rank 
based on the reduced higher order singular value decomposition (RHOSVD)
introduced in \cite{khor-ml-2009}. 
Its computational complexity scales linearly in the dimension size $d$, $O(d n^2 R)$,
since it does not require the construction of a full size tensor.
The RHOSVD is  the kernel of  of the basic rank reduction schemes in 
tensor computations in scientific computing \cite{Khor_bookQC_2018}, 
 in particular, for  the construction of the 
RS tensor decompositions in the numerical treatment of electrostatics in many particle systems.

The most considerable contribution to tensor numerical methods in many dimensions is the invention of 
the tensor-train (TT) 
format in \cite{OsTy_TT:09,Osel_TT:11}, which provides computations on  
multidimensional data arrays with linear complexity scaling in both dimension $d$ and 
the univariate grid size $n$. 
This format is defined as follows: Given the rank parameter 
${\bf r}=(r_1,...,r_d)$, $r_d=1$,  the  tensor
${\bf A}=[a_{\bf i}] \in \mathbb{R}^{n^{\otimes d}}$ belongs to
the rank-${\bf r}$ TT format if it can be parametrized 
by contracted product of tri-tensors in 
$\mathbb{R}^{r_{\ell-1}\times n_\ell \times r_\ell} $,
\begin{eqnarray*}
a_{i_1...i_d}& = 
& {\sum}_{\boldsymbol{\alpha}} 
G^{(1)}_{ \alpha_1}[{i_1}]
G^{(2)}_{\alpha_1\alpha_2}[i_2] \cdots G^{(d)}_{\alpha_{d-1}}[i_d] %\nonumber\\
 \equiv  {G^{(1)}[{i_1}]} {G^{(2)}[{i_2}]}... {G^{(d)}[{i_d}]},
\label{Def:TT}
\end{eqnarray*}
where ${G^{(\ell)}[{i_\ell}}]$ is an 
$r_{\ell-1}\times r_{\ell}$ matrix for $1\leq i_\ell \leq n_\ell$.
This explains the alternative name the matrix product state (MPS) traditionally used in 
quantum chemistry computations \cite{White:93,Scholl:11}. The storage size for this product type parametrization 
is estimated by $d r^2 n$.
%\textcolor{red}{Storage:} $d r^2 n\ll n^d$, $n=\max n_k$.
The canonical, Tucker, TT and hierarchical Tucker (HT) \cite{Hack_Book12} tensor formats 
proved to provide the efficient numerical 
tools in various applications 
\cite{OsTy_TT2:09,DoKhOsel:11,OsDo:11,lubich-dynht-2012,LuOsVa:15,SulimTyrt:2017,BeOnSt:19,HaKrUs:17}. 

The quantized tensor train (QTT) decomposition introduced in \cite{KhQuant:09,Osel_QTT:10}  
allows to further reduce the representation complexity  for multidimensional tensors  
to logarithmic scale in the volume size,  $O(d \log n)$. 
 QTT  tensor decompositions are  nowadays      
widely used in solution of the multidimensional problems in numerical analysis,  see for example, 
\cite{RaOs_Conv:15,DoKhOsel:11,KazKhamNipSchw:13,KaReSch:13,LuOsVa:15,Cichocki:2016}, and 
 \cite{KressSteinUsch:13,DolKhLitMat:14,RaOs:16,MaEsLiHaZa:19,BaSchUs:16,MaRaSch:19,MaEsLiHaZa:19},
 as well as the  recent books on tensor numerical methods  in
computational quantum chemistry and in scientific computing \cite{Khor_bookQC_2018,Khor-book-2018} 
and references therein.

\begin{footnotesize}

\end{footnotesize}

\end{document}